	\theoremstyle{plain}
		\newtheorem{thm}{Theorem}[section]	
		\newtheorem{cor}[thm]{Corollary}	
		\newtheorem{lem}[thm]{Lemma}		
	\theoremstyle{definition}
		\newtheorem{defn}[thm]{Definition}	
		\newtheorem{ex}[thm]{Example}		
	\theoremstyle{remark}
		\newtheorem{rem}[thm]{Remark}		
\numberwithin{equation}{section}	
\newcommand{\R}{\mathbb{R}}	
\newcommand{\C}{\mathbb{C}}		
\newcommand{\Q}{\mathbb{Q}}		
\newcommand{\T}{\mathbb{T}}		
\newcommand{\U}{\mathbb{U}}		
\newcommand{\Sp}{\mathrm{Sp}}
\newcommand{\Graph}{\mathrm{Gr\,}}
\newcommand{\Sym}{\mathrm{Sym}}
\newcommand{\Gr}{\mathrm{Gr}}
\newcommand{\N}{\mathbb{N}}		
\newcommand{\MasSymp}{\iota}
\newcommand{\mIndex}{\mathcal{I}}
\newcommand{\Z}{\mathbb{Z}}		
\newcommand{\noo}[1]{\overset {\mbox{%
\lower1pt\hbox{${\scriptscriptstyle o}$}}}n^{\mbox{%
\lower2pt\hbox{$\scriptscriptstyle #1$}}}}
\renewcommand{\leq}{\leqslant}
\renewcommand{\geq}{\geqslant}
\renewcommand{\hat}{\widehat}
\renewcommand{\tilde}{\widetilde}
\renewcommand{\=}{\coloneqq}			
\newcommand{\email}[1]{\href{mailto:#1}{\textsf{#1}}}
\newcommand{\Id}{I}
\title{Mean index for non-periodic orbits in  Hamiltonian systems }
\author{Xijun Hu \thanks{Partially supported by NSFC (No. 11790271, 11425105) },
Li Wu \thanks{Corresponding author, partially supported by NSFC (No. 11425105) }, }
\date{\today}
\date{\today}
\begin{document}
 \maketitle

\begin{abstract} 
In this paper, we define mean index for non-periodic orbits in Hamiltonian systems and 
study its properties. In general, the mean index is an interval in $\R$  which is  uniformly continuous on the systems. We show that the index interval is  a  point for a quasi-periodic orbit.  The mean index can be considered as a generalization of  rotation number which defined by  Johnson and Moser  in the study of 
almost periodic Schrodinger operators.  Motivated by their works, we study the relation of Fredholm property of the linear operator and the mean index at the end of the paper.

\vskip0.2truecm
\noindent
\textbf{AMS Subject Classification: 
	37J46, 37C55, 47A53, 53D12.}
\vskip0.1truecm
\noindent
\textbf{Keywords:} Mean index, Maslov-type index, Quasi-periodic orbit,  Fredholm operator. 
\end{abstract}


\section{Introductions }\label{sec:definition}
In this paper, we consider the following linear Hamiltonian system
\begin{equation}\label{eq:general linear_system}
\dot z=JB(t)z, \quad  t\in  \R ,
\end{equation}
where $J\= \begin{bmatrix}
	0 & -\Id\\ \Id & 0
	\end{bmatrix}$ denotes the {\em standard symplectic matrix\/}, $B(t)\in \Sym(d,\R)$  the set of all $d \times d$ symmetric matrices. Throughout of the paper, we assume 
\begin{itemize}
	 \item[{\bf(L1)\/}]  $B\in E_K:=\{B \in C^0(\R, \Sym(2d,\R))| \|B\| \le K\}$ for some $K>0$.	 
	\end{itemize}
Let $\gamma(t)$ be the fundamental solution matrix of \eqref{eq:general linear_system}, that is  $\dot{\gamma}(t)=JB(t)\gamma(t), \gamma(0)=\Id$.  It is well known that $\gamma(t)\in \Sp(2d):=\{M\in GL(\R^{2d})| M^TJM=J\}$.  In the periodic case, that is  $B(t)=B(t+T)$, the Maslov-type index $$i_\omega(\gamma, [0,T])\in\Z, \, \omega\in\U$$ is well defined, and the mean index per period is defined by 
\begin{equation}\label{eq:mean index period}
\hat{i}(\gamma)=\lim_{k\to \infty}\frac{i_1(\gamma, [0,kT])}{k}.
\end{equation}
It is   an important tool  in study the multiplicity and 
stability of periodic orbits in Hamiltonian systems \cite{Eke90, Lon02}.

In the case of non-periodic,  Ekeland \cite{Eke90}   defined  the mean index $\mIndex$ by the limit of   $\frac{i_1(\gamma, [0,T])}{T}$, but as pointed out by him,  no reason for  $\frac{i_1(\gamma, [0,T])}{T}$ 
converges  unless $B(t)$ happens to be periodic. Ekeland proved that, $\mIndex$
 exists  almost every where on  the energy level of convex Hamiltonian system. 
  There are few results  about the mean index of non-periodic trajectory.  Only recently, Zhou, Wu and Zhu \cite{ZWZ18} gave a generalization of   Ekeland's  almost existence  theorem.  In general, we  almost know nothing about the mean index. Motivated by their works, we give the following definition.

 \begin{defn}\label{def:UL_index}
 We define the positive upper mean index   $\mIndex_U(\gamma)$ and lower mean index $\mIndex_L(\gamma)$ of the linear system
	 $\eqref{eq:general linear_system}$ as  follows:

\begin{equation}
	 \begin{cases}
	 \mIndex_U(\gamma)=	\varlimsup_{l\to +\infty} \frac{i_1(\gamma(t),[0,l])}{l} \\
	 	\mIndex_L(\gamma)=\varliminf_{l\to +\infty} \frac{i_1(\gamma(t),[0,l])}{l}
	\end{cases}.
\end{equation}  
Similary, we define
\begin{equation}
	 \begin{cases}
	 \mIndex^-_U(\gamma)=	\varlimsup_{l\to +\infty} \frac{i_1(\gamma(t),[0,-l])}{-l} \\
	 	\mIndex^-_L(\gamma)=\varliminf_{l\to +\infty} \frac{i_1(\gamma(t),[0,-l])}{-l}
	\end{cases},
\end{equation}  
where $\gamma(t),[0,-l]$  is the path $\gamma(-t), t\in[0,l]$.

 \end{defn}
Since the system \eqref{eq:general linear_system} is determined by $B(t)$, we can also denote the upper and lower mean index by $\mIndex_U(B)$ and $\mIndex_L(B)$ respectively. 

We  give Example \ref{ex:non pt} to show that it is possible $ \mIndex_U(\gamma)\neq  \mIndex_L(\gamma)$.  In this case, we proved  that for any $\alpha\in  [\mIndex_L(\gamma),  \mIndex_U(\gamma)]$, there exists a sequence $t_n\to \infty$, such that 
\begin{equation} \label{eq:lim betw}\lim_{n\to\infty}\frac{i_1(\gamma,[0,t_n])}{t_n}=\alpha. \end{equation}
Please refer to Lemma \ref{lem:index_interval} for the detail.
Then we define positive mean index set and negative mean index set
by 
\begin{equation} \label{eq:index set} \mIndex(B)=[\mIndex_L(\gamma),  \mIndex_U(\gamma)],\quad \mIndex^-(B)=[\mIndex^-_L(\gamma),  \mIndex^-_U(\gamma)]. \end{equation}
Obviously, in the $T$-periodic case,  both $\mIndex(B)$ and $\mIndex^-(B)$ are points and satisfy
\begin{equation} \label{eq:index set per} \hat{i}(\gamma,[0,T]) =T \mIndex(B)=T \mIndex^-(B). \end{equation}

Please note that $\mIndex(B), \mIndex^-(B)$ are invariant under translation of time, and they do not depend on the value of $B(t)$ at any finite interval. Roughly speaking, they are only depend on the value of $B(t)$ at infinity. Please refer to Corollary \ref{cor:trans_invariant} for the detail. Moreover, we proved that $\mIndex(B)$ and $\mIndex^-(B)$ are uniformly  continuous on $E_K$. 

\begin{thm}\label{thm:uniform conti}  $\mIndex_U, \mIndex_L, \mIndex^-_U, \mIndex^-_L$ are uniformly continuous on $E_K$.
\end{thm}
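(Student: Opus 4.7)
The plan is to prove a quantitative comparison of the finite-interval indices with an error growing at most linearly in $l$, and then to divide by $l$.

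\textbf{Step 1 (the key estimate).} I would first establish that there exist constants $C_1=C_1(K,d)$ and $C_2=C_2(d)$ such that for all $B_1,B_2\in E_K$ and all $l>0$
\begin{equation*}
\bigl|\, i_1(\gamma_{B_1},[0,l]) - i_1(\gamma_{B_2},[0,l])\,\bigr|\; \leqslant\; C_1\,l\,\|B_1-B_2\|_{C^0} + C_2.
\end{equation*}
The natural route is via spectral flow: identify $i_1(\gamma_B,[0,l])$, up to a bounded correction depending only on $d$, with the spectral flow of the self-adjoint family $\mathcal{A}(B) = -J\partial_t - B(t)$ on $L^2([0,l],\R^{2d})$ with periodic boundary conditions. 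Along the straight-line homotopy $B_s=(1-s)B_2+sB_1$ one has $\|\partial_s B_s\|\leqslant\|B_1-B_2\|_{C^0}$, so the spectral flow from $s=0$ to $s=1$ (which equals the index difference) is bounded by the number of eigenvalues of $\mathcal{A}(B_2)$ in the interval $(-\|B_1-B_2\|_{C^0},\|B_1-B_2\|_{C^0})$, and a Weyl-type eigenvalue count for $\mathcal{A}(B)$ with $B\in E_K$ on an interval of length $l$ supplies exactly the right-hand side. An equivalent, more elementary route is to split $[0,l]$ into $\lfloor l/T\rfloor$ pieces of a fixed length $T$, estimate $\|\gamma_{B_1}-\gamma_{B_2}\|_{C^0([jT,(j+1)T])}\leqslant T e^{KT}\|B_1-B_2\|_{C^0}$ by Gr\"onwall, apply a uniform modulus of continuity for $i_1$ on a compact family of paths, and sum, with catenation corrections bounded by a constant independent of $l$.

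\textbf{Step 2 (from the estimate to uniform continuity).} Dividing by $l$ gives
\begin{equation*}
\left|\frac{i_1(\gamma_{B_1},[0,l])}{l}-\frac{i_1(\gamma_{B_2},[0,l])}{l}\right|\;\leqslant\; C_1\,\|B_1-B_2\|_{C^0}+\frac{C_2}{l}.
\end{equation*}
Using the elementary inequality $|\varlimsup a_l-\varlimsup b_l|\leqslant\varlimsup|a_l-b_l|$ (and the analogue for $\varliminf$), passing to the limit $l\to+\infty$ yields
\begin{equation*}
|\mIndex_U(B_1)-\mIndex_U(B_2)|\leqslant C_1\|B_1-B_2\|_{C^0},\qquad |\mIndex_L(B_1)-\mIndex_L(B_2)|\leqslant C_1\|B_1-B_2\|_{C^0},
\end{equation*}
which is Lipschitz, hence uniform, continuity of $\mIndex_U$ and $\mIndex_L$ on $E_K$. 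Applying the same argument to the time-reversed coefficient $\widetilde B(t)=B(-t)$, which still lies in $E_K$, handles $\mIndex^-_U$ and $\mIndex^-_L$.

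\textbf{Main obstacle.} The crux is Step 1: the constant in front of $\|B_1-B_2\|_{C^0}$ must grow \emph{linearly} in $l$. Slower (sublinear) growth would force $\mIndex_U,\mIndex_L$ to be constant on $E_K$, which is false, while faster growth is useless after division by $l$. Obtaining exactly this linear rate requires a genuinely quantitative spectral-flow or Weyl-type estimate, uniform in $B\in E_K$; the elementary catenation version must further guarantee that the per-piece contribution tends to zero as $\|B_1-B_2\|_{C^0}\to 0$ and that the boundary concatenation corrections do not accumulate beyond a bound independent of $l$. Once Step 1 is in hand, Steps 2--3 are essentially free.
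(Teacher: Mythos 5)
Your Step 2 is fine (the $\varlimsup$/$\varliminf$ subadditivity and the time reversal for $\mIndex^-_U,\mIndex^-_L$ are standard), but the whole argument rests on the quantitative Step 1, and neither route you sketch for it actually works. The deeper issue is that you are working with $i_1$, which is integer-valued and hence genuinely discontinuous in $B$; the paper instead works with the $\omega$-averaged index $\hat i(\gamma,[0,n])=\frac{1}{2\pi}\int_0^{2\pi}i_{e^{i\theta}}(\gamma,[0,n])\,d\theta$, which \emph{is} continuous, and this difference is not cosmetic.

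Concretely, your ``elementary catenation'' route fails at two places. First, the Gr\"onwall estimate
$\|\gamma_{B_1}-\gamma_{B_2}\|_{C^0([jT,(j+1)T])}\leq Te^{KT}\|B_1-B_2\|_{C^0}$
is false: Gr\"onwall over $[0,(j+1)T]$ gives a bound $\sim (j+1)T e^{K(j+1)T}\|B_1-B_2\|_{C^0}$, which grows exponentially with $j$. If you instead compare the renormalized pieces $\gamma_{B_i}(jT+t)\gamma_{B_i}(jT)^{-1}$, $t\in[0,T]$, these \emph{are} uniformly close, but by symplectic invariance the contribution of the $j$-th piece to $i_1(\gamma_{B_i},[0,l])$ equals $\MasSymp(\gamma_{B_i}(jT)^{-1},\,\gamma_{B_i}(jT+t)\gamma_{B_i}(jT)^{-1})$, and the two \emph{reference matrices} $\gamma_{B_1}(jT)^{-1}$ and $\gamma_{B_2}(jT)^{-1}$ drift apart exponentially. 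The comparison inequality \eqref{index fini d} only bounds the resulting discrepancy by $2d$ \emph{per piece}, so the ``catenation corrections'' accumulate to order $2d\,l/T$ — a linear-in-$l$ quantity that does \emph{not} vanish as $\|B_1-B_2\|_{C^0}\to 0$, which is fatal after dividing by $l$. Second, there is no ``uniform modulus of continuity for $i_1$ on a compact family of paths'': $i_1$ is integer-valued, so a jump of size $\geq 1$ can occur for arbitrarily small perturbations, and this is precisely the discontinuity the paper is designed to circumvent. Your spectral-flow route has the same soft spot: the needed inequality is a uniform-in-$B$, uniform-in-$l$ local Weyl (Wegner-type) bound of the form $\#\{\lambda\in\sigma(-J\partial_t-B):|\lambda|\leq\delta\}\leq C_1\delta l+C_2$ on periodic boundary conditions over $[0,l]$. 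This is not implied by $\|B\|\leq K$: Weyl asymptotics control the counting function at scale $O(1)$, not at scale $O(\delta)$ with constants uniform over all $B\in E_K$, and the eigenvalue ``rotation speed'' $-J\gamma_\lambda(l)^{-1}\partial_\lambda\gamma_\lambda(l)=\int_0^l\gamma_\lambda(s)^*\gamma_\lambda(s)\,ds$ has eigenvalues that are only bounded between $\sim 1/K$ and $\sim e^{2Kl}$, far from the clean $\sim l$ you would need. Note also that if your Step 1 held, $\mIndex_U$ would be \emph{Lipschitz} in $\|\cdot\|_{C^0}$; the paper only proves (and, given the eigenvalue-continuity argument it uses, can only prove) uniform continuity, which should make you suspicious that the Lipschitz estimate is not actually available.

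The paper's proof gets around both problems with two ideas you do not use: (i) replace $i_1$ by the averaged index $\hat i$, which is \emph{uniformly continuous} on $E_K$ for each fixed interval length (proved via continuity of the spectrum of $\gamma_s(1)$ and a measure estimate on the set of $\theta$ for which $e^{i\theta}$ can be an eigenvalue along the homotopy); and (ii) a Fekete-type sub/superadditivity, $f(B,n+m)\leq f(B,n)+f(\mathcal{S}^nB,m)$ and the reverse for $g$, with $f\geq\hat i\geq g\geq f-2d$, which yields the \emph{uniform-in-$B$} bound $|\mIndex_U(B)-H_k(B)/2^k|\leq 2d/2^k$. Continuity of $\mathcal{I}_U$ is then deduced by first choosing $k$ (independent of $B$) to make $2d/2^k$ small, and then using the uniform continuity of $h(\cdot,2^k)$. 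If you want to salvage your plan, replace $i_1$ by $\hat i$ throughout and abandon the Lipschitz goal in favor of this two-parameter ($k$ then $\delta$) scheme.
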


Let $x$ be  a orbit  of a $C^2$ Hamiltonian systems, that is $\dot{x}=J\mathcal{H}'(x(t))$, $\mathcal{H}\in C^2(\R^{2n}, \R)$. Let $B(t)=\mathcal{H}''(x(t))$, we define 
$$\mIndex(x)=\mIndex(B), \quad \mIndex^-(x)=\mIndex^-(B),$$
 and $\mIndex(\xi)=\mIndex(x)$, where $x(0)=\xi$.  It is obvious that $\mIndex, \mIndex^-$ are constant along the orbit.
  Assume $\Lambda$ is an invariant set of the Hamiltonian flow. We define 
\begin{equation} \label{def:index inv set} \mIndex(\Lambda)=\cup_{\xi\in\Lambda} \mIndex(\xi), \quad \mIndex^-(\Lambda)=\cup_{\xi\in\Lambda} \mIndex^-(\xi). \end{equation}

An invariant set  is uniquely ergodic if there is precisely one invariant probability measure with the Hamiltonian flow.  A special case is the quasi-periodic orbit.  More precisely, let $\Lambda(x):=\overline{\{x(t), t\in\R\}}$ which is diffeomorphic to torus $\T^n$ and let $D:  \T^n\to \Lambda(x)$ be the homeomorphism, then  $x(t)=D(D^{-1}x(0)+\omega t)$  with $\omega=(\omega_1,\cdots,\omega_n)$ which are independent over $\Q$. In this case, $\Lambda(x)$ is uniquely ergodic.  The following theorem shows that the mean index for a quasi-periodic orbit is a point.

\begin{thm}\label{thm:quasi index pt} For a quasi-periodic orbit $x$, then $\lim_{T\to\infty}\frac{i_1(\gamma,[0,T])}{T} $ exists  and  $$\mIndex(\Lambda_x)=\mIndex^-(\Lambda_x)=\lim_{T\to\infty}\frac{i_1(\gamma,[0,T])}{T}.$$
\end{thm}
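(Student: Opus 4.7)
The plan is to exploit unique ergodicity of the quasi-periodic flow on $\T^n$ together with a bounded-defect additivity of the Maslov-type index. Parametrize the orbit closure by $\theta\in\T^n$ via the homeomorphism $D$; for each $\theta$ set $B_\theta(t)\=\mathcal{H}''\bigl(D(\theta+\omega t)\bigr)$ and let $\gamma_\theta$ be the corresponding fundamental solution.

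The first step is to show that $\theta\mapsto\mIndex_U(B_\theta)$ and $\theta\mapsto\mIndex_L(B_\theta)$ are constant on $\T^n$. Since $D$ is a homeomorphism, $\T^n$ is compact, and $\mathcal{H}\in C^2$, the assignment $\theta\mapsto B_\theta$ is continuous into $E_K$, so by Theorem \ref{thm:uniform conti} both functions are continuous on $\T^n$. Corollary \ref{cor:trans_invariant} makes them invariant under every rotation $\theta\mapsto\theta+\omega s$; rational independence of $\omega$ ensures every such orbit is dense in $\T^n$, forcing both to be constant.

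The second step is a quasi-cocycle estimate: there exists $C=C(d)>0$ such that
\begin{equation}\label{eq:plan-cocycle}
\bigl|\, i_1(\gamma_\theta,[0,s+t])-i_1(\gamma_\theta,[0,s])-i_1(\gamma_{\theta+\omega s},[0,t])\,\bigr|\leq C
\end{equation}
for all $\theta\in\T^n$ and $s,t>0$; this is the path-additivity of the Maslov-type index modulo a bounded defect coming from possible crossings at the concatenation point. Writing $\phi_\tau(\theta)\=\tau^{-1}i_1(\gamma_\theta,[0,\tau])$, which is uniformly bounded by (L1), iteration of \eqref{eq:plan-cocycle} yields, for every $N\geq 1$ and $\tau>0$,
\begin{equation}\label{eq:plan-telescope}
\left|\, \tfrac{1}{N\tau}\,i_1(\gamma_\theta,[0,N\tau]) \,-\, \tfrac{1}{N}\sum_{k=0}^{N-1}\phi_\tau(\theta+k\omega\tau)\,\right|\leq \tfrac{C}{\tau}.
\end{equation}

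For fixed $\tau$ the rotation $\theta\mapsto\theta+\omega\tau$ is uniquely ergodic with respect to Haar measure $\mu$ on $\T^n$, so Birkhoff's theorem gives $\mu$-a.e.\ convergence of the right-hand sum in \eqref{eq:plan-telescope} to $\int_{\T^n}\phi_\tau\,d\mu$. Bridging arbitrary $T=N\tau+r$ with $0\leq r<\tau$ via the uniform bound $|i_1(\gamma_\theta,[0,r])|\leq M(\tau,K)$, we obtain
\begin{equation}
\mIndex_L(B_\theta),\,\mIndex_U(B_\theta)\in\left[\int_{\T^n}\phi_\tau\,d\mu-\tfrac{C}{\tau},\,\int_{\T^n}\phi_\tau\,d\mu+\tfrac{C}{\tau}\right]
\end{equation}
for $\mu$-a.e.\ $\theta$, and the constancy from the first step propagates this bound to \emph{every} $\theta$. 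Hence $\mIndex_U(B_\theta)-\mIndex_L(B_\theta)\leq 2C/\tau$ for every $\tau>0$; sending $\tau\to\infty$ collapses the index interval to a single point and identifies $\lim_{T\to\infty}T^{-1}i_1(\gamma,[0,T])$ with $\lim_{\tau\to\infty}\int_{\T^n}\phi_\tau\,d\mu$. The same argument applied to $t\mapsto -t$ gives $\mIndex^-(\Lambda_x)$ equal to the same value.

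The main obstacle is the quasi-cocycle estimate \eqref{eq:plan-cocycle} with a dimension-only defect: the bound must be uniform in $\theta$ and in the times $s,t$, which typically requires either a small symplectic perturbation at the concatenation point or a definition of $i_1$ (e.g.\ via spectral flow or Robbin--Salamon crossing forms) that is manifestly additive up to a universal constant. Once this is in hand, the ergodic step is standard, and the first step is what allows the $\mu$-a.e.\ statement to specialize to the specific orbit in question.
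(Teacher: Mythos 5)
Your route is correct in outline but genuinely different from the paper's, and it is worth comparing. The paper proves Theorem \ref{thm:quasi_periodic_index} by recycling the $F_k,G_k,H_k$ apparatus from Lemma \ref{lem:mean_index_formula}, and crucially works with the \emph{averaged} index $\hat{i}$ rather than $i_1$: Step~1 of Theorem \ref{thm:uniform conti} only establishes continuity of $\hat{i}(\gamma_B,[0,n])$ in $B$, not of the integer-valued $i_1$. Continuity then lets the paper invoke Theorem \ref{thm:uniquely_ergodic} (Walters' uniform ergodic theorem), which gives convergence of the Birkhoff averages \emph{for every} $p\in\T^m$ in one stroke, with no measure-zero exceptional sets to worry about. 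You instead work directly with $\phi_\tau=\tau^{-1}i_1(\gamma_\theta,[0,\tau])$, which is bounded and measurable but not continuous; this forces you to use Birkhoff's a.e.\ theorem and then invoke a separate constancy argument (continuity of $\mIndex_U,\mIndex_L$ from Theorem \ref{thm:uniform conti} plus translation-invariance from Corollary \ref{cor:trans_invariant} plus density of the orbit) to upgrade a.e.\ to everywhere. Your quasi-cocycle estimate \eqref{eq:plan-cocycle} is not really an open obstacle: it is exactly what Lemma \ref{lem:index_compare}, \eqref{index comp} and \eqref{index fini d} already give, with $C=3d$, via the identity $\gamma_\theta(s+\cdot)\gamma_\theta(s)^{-1}=\gamma_{\theta+\omega s}(\cdot)$ and symplectic invariance of the Maslov index. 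Net effect: your route avoids the subadditive $H_k$ machinery, which is a simplification, at the price of some measure-theoretic care that the paper sidesteps.

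Two points you glossed over need to be filled in for the argument to be complete. First, the time-$\tau$ map $\theta\mapsto\theta+\omega\tau$ of $\T^n$ is uniquely ergodic only when $\omega\tau$ is rationally independent mod $1$; the excluded $\tau$ form a countable set (for each nonzero $m\in\Z^n$ the bad $\tau$ lie in the discrete set $\Z/\langle m,\omega\rangle$), so you must run the Birkhoff step along a sequence $\tau_j\to\infty$ of good times rather than claiming the bound $\mIndex_U-\mIndex_L\leq 2C/\tau$ for \emph{every} $\tau>0$. This is harmless for the limit but should be said. Second, ``the same argument applied to $t\mapsto -t$'' hides a genuine computation: you need $|i_1(\gamma_\theta,[0,-\tau])+i_1(\gamma_{\theta-\omega\tau},[0,\tau])|\leq C'$ (from $\gamma_\theta(-t)=\gamma_{\theta-\omega\tau}(\tau-t)\gamma_{\theta-\omega\tau}(\tau)^{-1}$, path reversal, and \eqref{index fini d}) together with translation-invariance of Haar measure to conclude $\int\phi_\tau^-\,d\mu=-\int\phi_\tau\,d\mu+O(1/\tau)$ and hence $\mIndex^-(\Lambda_x)=\mIndex(\Lambda_x)$; the paper does this explicitly via $h^-(p,n)=-h(P^{-n}p,n)$ and $P$-invariance of $\mu$.
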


For a bounded orbit $x$,  the $\omega$-limit set ($\alpha$-limit set)of $x$  is denoted by $\Lambda_\omega(x)$ ($\Lambda_\alpha(x)$).  $\Lambda_\omega(x)$ ($\Lambda_\alpha(x)$) is a compact invariant set.  Obviously, for a quasi-periodic orbit $x$, $\Lambda_\omega(x)=\Lambda_\alpha(x)=\Lambda(x) $.

\begin{cor}\label{cor:limit set} Assume $\Lambda_\omega(x)=\Lambda(\tilde{x})$ for some quasi periodic orbit $\tilde{x}$, then    we have 
\begin{equation} \label{eq: limit include} \mIndex(x)=\mIndex(\Lambda_\omega(x))=\mIndex(\Lambda(\tilde{x})).\end{equation}
Same result holds for $\Lambda_\alpha(x)$.
\end{cor}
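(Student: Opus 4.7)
The second equality $\mIndex(\Lambda_\omega(x))=\mIndex(\Lambda(\tilde x))$ is immediate from the hypothesis, so the substance of the statement is that $\mIndex(x)$ coincides with the single point $\alpha\=\mIndex(\Lambda(\tilde x))$ furnished by Theorem \ref{thm:quasi index pt}. I argue only for $\mIndex(x)$, the case of $\mIndex^-(x)$ being obtained by reversing time (which also handles the parallel assertion for $\Lambda_\alpha(x)$). The guiding idea is to replace, on long time windows at large $t$, the coefficient $B(t)=\mathcal{H}''(x(t))$ by translates of $\tilde B(t)=\mathcal{H}''(\tilde x(t))$, and then to reduce to Theorem \ref{thm:quasi index pt}.

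The first step is an approximation lemma: since $\Lambda_\omega(x)=\Lambda(\tilde x)$, for every $\varepsilon>0$ there is $T_0$ with $\dist(x(t),\Lambda(\tilde x))<\varepsilon$ for all $t\geq T_0$; combining this with the density of $\{\tilde x(s):s\in\R\}$ in $\Lambda(\tilde x)$ and continuous dependence of the Hamiltonian flow on initial data (applied in the bounded invariant region containing the orbits) produces, for any fixed $L>0$ and every $T\geq T_0$, a parameter $s=s(T)\in\R$ with
$$\sup_{u\in[0,L]}\|B(T+u)-\tilde B(s(T)+u)\|<\eta(\varepsilon,L),$$
where $\eta(\varepsilon,L)\to 0$ as $\varepsilon\to 0$ for fixed $L$, uniformly in $T$.

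I then fix $L$ large, split $[T_0,T_0+NL]$ into $N$ windows of length $L$, and use the quasi-additivity of $i_1$ under concatenation (the correction being uniformly bounded by a constant depending only on $2d$) to express the total index as the sum of the window contributions plus an $O(N)$ term. Finite-interval continuity of $i_1$ with respect to $C^0$-perturbations of the coefficients yields
$$i_1(\gamma,[T_0+kL,T_0+(k+1)L])=i_1(\tilde\gamma,[s_k,s_k+L])+O(1)$$
uniformly in $k=0,\dots,N-1$. Unique ergodicity of the quasi-periodic flow on $\Lambda(\tilde x)$ then supplies $L^{-1}i_1(\tilde\gamma,[s,s+L])\to\alpha$ as $L\to\infty$, uniformly in $s\in\R$. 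Dividing by $l\sim NL$ and letting first $N\to\infty$, then $L\to\infty$ and $\varepsilon\to 0$, forces $\mIndex_U(x)=\mIndex_L(x)=\alpha$, which is the desired equality.

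The main obstacle is the uniform-in-$s$ convergence of $L^{-1}i_1(\tilde\gamma,[s,s+L])$ to $\alpha$: it is the non-abelian analogue of the classical uniform convergence of Birkhoff averages of continuous observables on uniquely ergodic sets, and I expect to obtain it by realising $i_1$ as the Birkhoff integral of a continuous crossing form plus bounded endpoint corrections, essentially retracing and sharpening the argument behind Theorem \ref{thm:quasi index pt}. A secondary technical point is the quantitative $C^0$-continuity of $i_1$ on a fixed finite interval, which is classical Maslov-type index theory but must be quoted in a form strong enough to absorb the window-by-window discrepancy into the $O(1)$ term above.
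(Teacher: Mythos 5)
Your proposal matches the paper's proof in all essential respects: approach to the torus, window decomposition with bounded concatenation errors, finite-interval continuity of the (averaged) window index, and uniform convergence on the torus via unique ergodicity --- the last being exactly the paper's Corollary \ref{cor:uniformly_converge}, obtained as you anticipate by extending the argument of Theorem \ref{thm:quasi index pt} (the paper implements the windows dyadically through the $H_{k,n}$ machinery of Section \ref{sec:property_mean_index}, and approximates the orbit directly by nearby points of the torus rather than by translates $\tilde x(s)$, but these are cosmetic differences). One technicality to adjust in the write-up: $i_1$ is integer-valued and hence not $C^0$-continuous in the coefficients on a finite interval; the paper instead works with $\hat i$ (the function $h(B,n)$), proves its uniform continuity in Step~1 of Theorem \ref{thm:uniform conti}, and controls the gap $|i_1-\hat i|\le 2d$ separately, so your $O(1)$ error term is precisely what must absorb this discrepancy.
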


We say an orbit $x$ is heteroclinic  to quasi-periodic if  the $\omega$-limit  set and $\alpha$-limit set are invariant torus of  some quasi-periodic orbits respectively.
	This theorem shows that for a heteroclinic orbit $x$ to quasi-periodic orbits,   then $\mIndex(x), \mIndex^-(x)$ is a point.

In \cite{JM82}, Johnson and Moser define a rotation number for almost periodic Schrodinger operator $\mathcal{L}=\frac{-d^2}{dx^2}+q(x)$, and use this rotation number to discuss the spectrum of $\mathcal{L}$.  In fact, the mean index can be thought as a generalization of rotation number in higher dimension,   and it  will be   explained in Section  3.

Motivated by Johnson and Moser's works, we use  mean index  to study the  Fredholm property of  the following operator
\begin{equation} \label{eq: operator} \mathcal{A}:-J\frac{d}{dt}-B, \quad on \quad L^2 (\R, \R^{2n}),\end{equation}
where  $(Bx)(t)=B(t)x(t)$.  We say $B$ is  asymptotic periodic if there exists periodic $\tilde{B}\in E_K$ ( $B(t+T)=B(t)$), such that $$\lim_{t\to\pm\infty}\|B(t)-\tilde{B}(t)\|=0.$$
We proved that
\begin{thm}\label{thm:fredholm}  Assume $B$ is asymptotic periodic, then  $\mathcal{A}$ is Fredhom if and only if  $\mIndex(B+\lambda)$, $\mIndex^-(B+\lambda)$ is independent of $\lambda$ for $|\lambda|$ small enough. 
\end{thm}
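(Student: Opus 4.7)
First I would reduce the problem to the purely periodic case. Let $\tilde B$ denote the periodic limit and set $\mathcal{A}_{\mathrm{per}}\=-J\tfrac{d}{dt}-\tilde B$ on $L^2(\R,\R^{2n})$. Using Theorem \ref{thm:uniform conti} one can approximate $B$ by $B_n$ that coincides with $\tilde B$ outside $[-n,n]$; since $\|B(t)-\tilde B(t)\|\to 0$ at $\pm\infty$, $\|B_n-B\|_\infty\to 0$, and Corollary \ref{cor:trans_invariant} gives $\mIndex(B_n+\lambda)=\mIndex(\tilde B+\lambda)$. Passing to the limit with Theorem \ref{thm:uniform conti} yields
\begin{equation}
\mIndex(B+\lambda)=\mIndex(\tilde B+\lambda)=\tfrac{1}{T}\hat{i}(\tilde\gamma_\lambda,[0,T])=\mIndex^-(\tilde B+\lambda)=\mIndex^-(B+\lambda),
\end{equation}
where $\tilde\gamma_\lambda$ is the fundamental matrix of $\dot z=J(\tilde B+\lambda\Id)z$. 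On the analytic side, since $B-\tilde B$ decays at infinity, multiplication by $B-\tilde B$ is a relatively compact perturbation of $\mathcal{A}_{\mathrm{per}}$ on $L^2(\R)$ (the domain $H^1(\R,\R^{2n})$ embeds compactly into $L^2_{\mathrm{loc}}$ and the coefficient decays at infinity), so by Weyl's theorem $\sigma_{\mathrm{ess}}(\mathcal{A})=\sigma_{\mathrm{ess}}(\mathcal{A}_{\mathrm{per}})$. Because $\mathcal{A}_{\mathrm{per}}$ has purely essential spectrum, Fredholmness of $\mathcal{A}$ at $0$ is equivalent to $0\notin\sigma(\mathcal{A}_{\mathrm{per}})$.

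The second step is Floquet-theoretic. Classical theory gives $\lambda\in\sigma(\mathcal{A}_{\mathrm{per}})$ if and only if $1\in\sigma(\tilde\gamma_\lambda(T))$. It remains to show that $1\notin\sigma(\tilde\gamma_0(T))$ is equivalent to $\lambda\mapsto\hat{i}(\tilde\gamma_\lambda,[0,T])$ being locally constant at $\lambda=0$. One direction is easy: if $1\notin\sigma(\tilde\gamma_0(T))$, then by continuity of $\lambda\mapsto\tilde\gamma_\lambda(T)$ the path $\tilde\gamma_\lambda$ avoids the Maslov cycle endpoint-wise for $|\lambda|$ small, so the integer $\hat{i}(\tilde\gamma_\lambda,[0,T])$ cannot jump. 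Conversely, if $1\in\sigma(\tilde\gamma_0(T))$, I would use the monotonicity of the Maslov-type index with respect to a strictly positive perturbation: the crossing form of the family $\tilde B+\lambda\Id$ with respect to $\lambda$ is positive-definite, so the Maslov-type crossing formula shows that when $\lambda$ passes through $0$, $\hat{i}(\tilde\gamma_\lambda,[0,T])$ jumps by exactly the multiplicity of $1$ as an eigenvalue of $\tilde\gamma_0(T)$, hence is not locally constant.

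Chaining the equivalences gives
\begin{equation}
\mathcal{A}\text{ Fredholm}\ \Longleftrightarrow\ 0\notin\sigma(\mathcal{A}_{\mathrm{per}})\ \Longleftrightarrow\ 1\notin\sigma(\tilde\gamma_0(T))\ \Longleftrightarrow\ \mIndex(B+\lambda),\,\mIndex^-(B+\lambda)\text{ are constant near }0,
\end{equation}
which is the claim. The main obstacle I expect is the strict-jump statement in the converse direction: simply knowing that the integer-valued $\hat{i}$ is monotone non-decreasing in $\lambda$ is not enough, because a priori the jump at a spectral value could be zero. Ruling this out requires a nondegenerate crossing-form computation for the family $\lambda\mapsto\tilde\gamma_\lambda(T)$, showing that every eigenvalue of $\tilde\gamma_0(T)$ at $1$ actually contributes a positive amount to the jump of the mean index. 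A secondary subtlety is the rigorous verification that $B-\tilde B$ is $\mathcal{A}_{\mathrm{per}}$-compact on $L^2(\R)$ despite not being compactly supported; this follows from Rellich-type arguments combined with the decay $\|B(t)-\tilde B(t)\|\to 0$, but requires care since $\mathcal{A}_{\mathrm{per}}$ is not semibounded.
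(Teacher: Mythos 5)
Your reduction to the periodic case via Weyl's theorem is a genuine and clean contribution: the paper proves Theorem \ref{thm:Fredholm_periodic} for strictly periodic $B$ and does not explicitly spell out the passage from asymptotic periodic $B$ to its periodic limit $\tilde B$. Your observation that multiplication by $B-\tilde B$ is $\mathcal{A}_{\mathrm{per}}$-compact (since $(\mathcal{A}_{\mathrm{per}}-i)^{-1}:L^2\to H^1$ is bounded, $H^1$ embeds compactly in $L^2_{\mathrm{loc}}$, and $B-\tilde B$ decays at $\pm\infty$) gives $\sigma_{\mathrm{ess}}(\mathcal{A})=\sigma_{\mathrm{ess}}(\mathcal{A}_{\mathrm{per}})$, and combined with Corollary \ref{cor:asymptotic} this is exactly the missing bridge. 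The paper's implicit route would instead invoke roughness of exponential dichotomies through Theorem \ref{thm:determin_Fredholm}; both work, yours is arguably more transparent on the analytic side.

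However, your second step contains a substantive error. You assert that ``classical theory gives $\lambda\in\sigma(\mathcal{A}_{\mathrm{per}})$ if and only if $1\in\sigma(\tilde\gamma_\lambda(T))$.'' This is the condition for the \emph{periodic boundary-value} spectrum on $L^2(\R/T\ZZ)$, not for the whole-line operator on $L^2(\R)$. The correct criterion for the whole-line periodic operator is that \emph{some} Floquet multiplier lies on the unit circle, i.e.\ $\sigma(\tilde\gamma_\lambda(T))\cap\U\neq\emptyset$; this is exactly the paper's Lemma \ref{lem:Fredholm}, obtained from Palmer's theorem. With your condition, a monodromy matrix with eigenvalues $e^{\pm i\theta_0}$, $\theta_0\neq 0$, would satisfy $1\notin\sigma(\tilde\gamma_0(T))$ yet $\mathcal{A}_{\mathrm{per}}$ would not be Fredholm, and $\hat i$ would in fact fail to be locally constant (since $i_{e^{i\theta_0}}$ jumps). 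Your chain of equivalences therefore breaks at this link.

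A second, smaller gap is the jump computation. You claim $\hat i(\tilde\gamma_\lambda,[0,T])$ ``jumps by exactly the multiplicity of $1$ as an eigenvalue of $\tilde\gamma_0(T)$.'' But $\hat i$ is the average $\frac{1}{2\pi}\int_0^{2\pi} i_{e^{i\theta}}\,d\theta$, not $i_1$, and is real-valued rather than integer-valued. A nondegenerate crossing of the single leaf $\{M:\det(M-\Id)=0\}$ makes $i_1$ jump by the multiplicity, but contributes nothing to the integral unless one also shows that the jump $\Delta(\omega)=i_\omega(\tilde\gamma_{s_0})-i_\omega(\tilde\gamma_{-s_0})$ stays strictly positive for $\omega$ in a set of positive measure in $\U$. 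The paper handles this carefully: using \eqref{eq:nondegenrate} it produces a neighborhood $V\subset\U$ of $\omega_0$ on which $\Delta(\omega)=\Delta(\omega_0)>0$, yielding $\hat i(B_{s_0})-\hat i(B_{-s_0})\geq\frac{1}{2\pi}\int_{e^{i\theta}\in V}\Delta(e^{i\theta})\,d\theta>0$. Without this local-constancy-of-$\Delta$ argument, strict positivity of the jump of $\hat i$ does not follow from crossing-form positivity at a single $\omega_0$.
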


This paper is organized as follows. In Section 2, we prove some basic properties of the mean index. In Section 3, we study the quasi-periodic orbits and study the relation with Fredholm property at Section 4. We give an appendix for Maslov-type index at Section 5.

\section{The property of Lower and upper index}\label{sec:property_mean_index}

In this section, we will prove some fundamental properties  of  upper and lower mean index.
 We will only consider $\mIndex_U(B),\mIndex_L(B)$, since every property will also hold for $\mIndex^-_U(B),\mIndex^-_L(B)$ if we change $B(t)$ with $B(-t)$.

In Definition \ref{def:UL_index}, we use $i_1$ to define mean index. In fact, it can be defined by  Maslov-type index $\MasSymp(M,\gamma)$ with any $M\in\Sp(2n)$.    Please refer to Section 5 for the detail of Maslov index. 

\begin{lem}\label{lem:boundary_invariant}
For any $M\in\Sp(2n)$, 		We have 
	\begin{align*}
		\mIndex_L(B)=\varliminf\MasSymp(M,  \gamma(t),[0,l])/l \\
		\mIndex_U(B)=\varlimsup\MasSymp(M,  \gamma(t),[0,l])/l .
	\end{align*}
\end{lem}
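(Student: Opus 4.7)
The plan is to show that, for any fixed $M\in\Sp(2n)$, the difference
\[
\bigl|\MasSymp(M,\gamma,[0,l])-\MasSymp(\Id,\gamma,[0,l])\bigr|
\]
is bounded by a constant $C=C(M,n)$ independent of $l$. Since $i_1(\gamma,[0,l])=\MasSymp(\Id,\gamma,[0,l])$ by definition, dividing by $l$ and taking $\varliminf$ and $\varlimsup$ as $l\to+\infty$ will immediately give the two claimed identities.

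First I would recall the Lagrangian description of $\MasSymp$ reviewed in Section~5: $\MasSymp(M,\gamma,[0,l])$ is (up to a normalization) the relative Maslov index $\iMas(\Graph(M),\Graph(\gamma(\cdot)); [0,l])$ of the Lagrangian path $t\mapsto \Graph(\gamma(t))$ in $(\R^{4n},(-J)\oplus J)$ with respect to the fixed Lagrangian $\Graph(M)$. In this formulation, $\MasSymp$ depends continuously (in the appropriate sense) on both Lagrangian arguments, and it enjoys the two structural properties I need: homotopy invariance with fixed endpoints, and the standard concatenation/cocycle identity.

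Next, since $\Sp(2n)$ is path-connected, choose a continuous path $M_s\in\Sp(2n)$, $s\in[0,1]$, with $M_0=\Id$ and $M_1=M$. Applying the cocycle/boundary formula to the rectangle $(s,t)\in[0,1]\times[0,l]$ in the parameter space, I get
\[
\MasSymp(M,\gamma,[0,l])-\MasSymp(\Id,\gamma,[0,l])
=\iMas\bigl(\Graph(M_s),\Graph(\gamma(l));s\in[0,1]\bigr)
-\iMas\bigl(\Graph(M_s),\Graph(\gamma(0));s\in[0,1]\bigr).
\]
Each term on the right-hand side is the Maslov index of a path in the Lagrangian Grassmannian of $(\R^{4n},(-J)\oplus J)$ at a \emph{fixed} reference Lagrangian, so each is bounded in absolute value by the dimension $2n$ of the Grassmannian strata. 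Hence the full difference is bounded by $4n$, uniformly in $l$.

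Finally, dividing by $l$ gives $\MasSymp(M,\gamma,[0,l])/l = i_1(\gamma,[0,l])/l + O(1/l)$, so the $\varliminf$ and $\varlimsup$ as $l\to\infty$ coincide with those of $i_1(\gamma,[0,l])/l$, yielding $\mIndex_L(B)$ and $\mIndex_U(B)$ respectively. The only delicate point is the invocation of the rectangular boundary formula with the uniform bound $2n$ on each of the two correction Maslov indices; this is the step that isolates the dependence on $M$ into a finite constant and is the crux of the argument, though it is standard once the Lagrangian intersection framework of Section~5 is in place.
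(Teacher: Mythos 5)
Your strategy --- show that $\bigl|\MasSymp(M,\gamma,[0,l])-\MasSymp(\Id,\gamma,[0,l])\bigr|$ is bounded uniformly in $l$ by a constant depending only on $M$ and $n$, then divide by $l$ --- is the same as the paper's. But your justification of the uniform bound contains a real error, and the paper's one-line argument is the correct way to obtain it. The paper invokes \eqref{index fini d}, namely $|\MasSymp(M_1,\gamma)-\MasSymp(M,\gamma)|\leq 2n$, an immediate corollary of the comparison theorem (Theorem~\ref{thm:comparison}) applied in $\R^{4n}$ to graph Lagrangians, together with the normalization $i_1(\gamma)+d=\MasSymp(\Id,\gamma)$. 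This gives a bound independent of both $M$ and $l$; dividing by $l$ finishes the proof.

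In your homotopy argument, the rectangle/cocycle formula reducing the difference to the two ``vertical'' Maslov indices along $s\mapsto\Graph(M_s)$ is structurally fine. The problem is the assertion that each such Maslov index ``is bounded in absolute value by the dimension $2n$ of the Grassmannian strata.'' That is false: the Maslov index of a path in $Lag(4n)$ with respect to a fixed reference Lagrangian has no a priori bound in terms of the dimension (a path that winds, such as $s\mapsto e^{sJ}\Lambda_0$ over a long interval, has arbitrarily large index). What is true is that (i) the correction term whose reference is $\Graph(\gamma(0))=\Graph(\Id)$ is a fixed finite number $C_0(M)$ depending only on $M$ and the chosen path $M_s$, and (ii) the other correction term, whose reference $\Graph(\gamma(l))$ depends on $l$, lies within $2n$ of $C_0(M)$ uniformly in $l$ --- but the justification for (ii) is again the comparison theorem \eqref{index fini d}/Theorem~\ref{thm:comparison}, applied with the two references $\Graph(\Id)$ and $\Graph(\gamma(l))$. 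So once repaired, your argument still hinges on the same comparison estimate the paper uses directly; the homotopy rectangle is an unnecessary detour that, as written, hides the actual reason the bound holds.

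A minor slip: $i_1(\gamma,[0,l])$ and $\MasSymp(\Id,\gamma,[0,l])$ are not equal by definition; they differ by the constant $d$ (see the remark following Definition~\ref{def:ge-Maslov-type index}). This of course disappears after dividing by $l$, so it does not affect the conclusion.
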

\begin{proof}   This is from the fact that $i_1(\gamma)+d= \MasSymp(I,\gamma)$ and the comparison theorem  \ref{thm:comparison}.
Please refer  to    \eqref{index fini d}. \end{proof}

\begin{lem}\label{lem:monotone}
	The upper and lower mean index is monotone for $B$.
	If $B_1(t)\ge B_2(t), t\in \R$, then  $\mIndex_U(B_1)\ge \mIndex_U(B_2)$ and $\mIndex_L(B_1)\ge \mIndex_L(B_2)$ 
\end{lem}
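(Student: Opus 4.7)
The plan is to reduce the statement to a finite-interval monotonicity property of the Maslov-type index $i_1$ and then pass to the limit. The key ingredient is the comparison theorem (Theorem \ref{thm:comparison}) already invoked in the proof of Lemma \ref{lem:boundary_invariant}. Applied to the straight-line homotopy $B_s(t) = (1-s)B_2(t) + sB_1(t)$ on a finite interval $[0,l]$, the spectral-flow interpretation of the Maslov-type index should give
\begin{equation}
i_1(\gamma_1,[0,l]) \ge i_1(\gamma_2,[0,l]) - C,
\end{equation}
where $C = C(d)$ depends only on the dimension and not on $l$; it absorbs the boundary/degeneracy correction terms that distinguish the stratum-dependent definitions of $i_1$.

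Given the finite-interval inequality, dividing through by $l$ yields
\begin{equation}
\frac{i_1(\gamma_1,[0,l])}{l} \ge \frac{i_1(\gamma_2,[0,l])}{l} - \frac{C}{l}.
\end{equation}
Taking $\varliminf_{l\to+\infty}$ on both sides and using $C/l \to 0$ immediately gives $\mIndex_L(B_1) \ge \mIndex_L(B_2)$; taking $\varlimsup_{l\to+\infty}$ gives $\mIndex_U(B_1) \ge \mIndex_U(B_2)$. The same argument applied to $B_i(-t)$ handles $\mIndex^-_U$ and $\mIndex^-_L$, as noted at the beginning of the section.

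The one place that requires care is the finite-interval comparison with an $l$-independent error. If the appendix's comparison result is stated only under a nondegeneracy hypothesis such as $\det(\gamma_i(l) - \Id) \ne 0$, I would handle the possibly degenerate case by perturbing $B_1$ to $B_1 + \epsilon \Id$ for small $\epsilon > 0$. This preserves the hypothesis $B_1 + \epsilon\Id \ge B_2$ and, for all but countably many $\epsilon$, places us in the nondegenerate regime; moreover $i_1$ for the perturbed path differs from the unperturbed one by at most $2d$, uniformly in $l$, since this bound is a general feature of the Maslov-type index under small parameter shifts. Sending $\epsilon \to 0^+$ recovers the desired inequality with a uniform constant, which is the only genuine obstacle in the argument.
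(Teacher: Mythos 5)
Your overall strategy matches the paper's: the lemma follows from a finite-interval monotonicity of the Maslov-type index, and dividing by $l$ and passing to $\varliminf$ and $\varlimsup$ is the right way to conclude. The paper, however, simply invokes the \emph{monotony property} of the Maslov index recorded in the appendix (\eqref{adp1.6}), which holds without any nondegeneracy hypothesis and directly gives
\[
i_1(\gamma_1,[0,l]) \ge i_1(\gamma_2,[0,l])
\]
for every $l$, i.e.\ with $C=0$. So the ``one place that requires care'' you flag is not actually an obstacle for the paper's route; no perturbation is needed.

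The perturbation fallback you propose contains a genuine error. You assert that $i_1(\gamma_{B_1+\epsilon \Id},[0,l])$ and $i_1(\gamma_{B_1},[0,l])$ differ by at most $2d$, \emph{uniformly in $l$}, for small $\epsilon>0$. This is false: the difference grows without bound as $l\to\infty$ for any fixed $\epsilon>0$. For instance, with $B_1\equiv 0$ one has $\gamma_{B_1}(t)=\Id$, so $i_1(\gamma_{B_1},[0,l])$ is a constant, while $\gamma_{B_1+\epsilon\Id}(t)=e^{\epsilon Jt}$ and $i_1(e^{\epsilon J t},[0,l])\approx 2d\lfloor \epsilon l/(2\pi)\rfloor$, which diverges linearly in $l$. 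This is exactly the phenomenon the mean index is designed to detect (it underlies Theorem~\ref{thm:fredholm}), so a uniform-in-$l$ bound on the perturbation error cannot hold. The $2d$-bound you have in mind is valid only for $\epsilon$ small depending on $l$ (small enough that $\gamma_s(l)$ does not cross the singular cycle for $s\in[0,\epsilon]$), which is useless for the limit $l\to\infty$. Also note that perturbing $B_1$ \emph{upward} produces the inequality $i_1(\gamma_{B_1+\epsilon\Id},[0,l])\ge i_1(\gamma_{B_1},[0,l])$, which is the wrong direction for recovering a lower bound on $i_1(\gamma_{B_1},[0,l])$ as $\epsilon\to 0^+$; the argument is circular as written. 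In short: drop the perturbation step and cite the monotony property \eqref{adp1.6} directly (transported to the graph Lagrangians via $\MasSymp(M,\gamma)=\mu(\mathrm{Gr}(M),\mathrm{Gr}(\gamma))$), and the rest of your limiting argument goes through cleanly.
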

\begin{proof}
	It is a direct consequence of the monotone property of Maslov-type index.
\end{proof}

Use this lemma, we see that   $\mIndex(K\Id) \ge \mIndex_{U}(B),\mIndex_{L}(B) \ge \mIndex(-K \Id)$.
We can calculate $\mIndex(\pm K\Id)$ directly, and  we get the  bound of  upper and lower mean index.
\begin{lem}
	$d K/\pi \ge \mIndex_U(B)$ and  $\mIndex_L(B)\ge -dK/\pi $.
\end{lem}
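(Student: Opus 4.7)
The plan is to sandwich $\mIndex_U(B), \mIndex_L(B)$ between the mean indices of the two constant systems $B \equiv \pm K \Id$, and then compute these two extreme values by hand. First I would observe that the hypothesis $\|B(t)\| \le K$ (with $B(t)$ symmetric) is equivalent to the matrix inequalities $-K \Id \le B(t) \le K \Id$ pointwise in $t$. By the monotonicity established in the previous lemma this immediately gives
\begin{equation}
\mIndex_L(-K\Id) \;\le\; \mIndex_L(B) \;\le\; \mIndex_U(B) \;\le\; \mIndex_U(K\Id),
\end{equation}
so the whole problem reduces to computing $\mIndex(\pm K \Id)$.

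Next I would note that the constant symmetric matrix $\pm K\Id$ is trivially $T$-periodic for every $T>0$; in particular we may take $T = 2\pi/K$, which is the minimal period of the fundamental solution $\gamma_{\pm K}(t) = e^{\pm K t J}$ (so that $\gamma_{\pm K}(2\pi/K) = e^{\pm 2\pi J} = \Id$). Hence, by the relation \eqref{eq:index set per} between the mean index set and the periodic mean index, $\mIndex(\pm K \Id)$ is a single point equal to $\hat{i}(\gamma_{\pm K},[0,2\pi/K])/(2\pi/K)$.

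The computation of $\hat{i}(\gamma_{\pm K},[0,2\pi/K])$ is where the main technical work sits. Under the standard block-diagonalisation of $J$, the path $e^{\pm KtJ}$ splits as the direct sum of $d$ copies of the planar rotation path $R_{\pm K t}$ in $\Sp(2,\R)$. For one planar block, a full rotation by $\pm 2\pi$ contributes $\pm 2$ to the Maslov-type $i_1$ index; iterating over $k$ periods gives $i_1(\gamma_{\pm K},[0,k\cdot 2\pi/K]) = \pm 2dk$. Dividing by $k\cdot 2\pi/K$ and passing to the limit in \eqref{eq:mean index period} yields
\begin{equation}
\mIndex(K\Id) \;=\; \frac{dK}{\pi}, \qquad \mIndex(-K\Id) \;=\; -\frac{dK}{\pi}.
\end{equation}
Combining with the sandwich inequality from the first step gives the claimed bounds $dK/\pi \ge \mIndex_U(B)$ and $\mIndex_L(B) \ge -dK/\pi$.

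The main obstacle, as indicated, is the direct Maslov-type index computation for a rotation path. I would handle it by invoking the crossing form at the $\omega=1$ crossings of $\gamma_{\pm K}$, which occur precisely at the integer multiples of $2\pi/K$; at each such crossing the crossing form is $\pm K \Id$ on $\R^{2d}$, positive definite (resp.\ negative definite) with signature $\pm 2d$, so the signed contribution over $k$ full periods is $\pm 2dk$ — after which everything is purely arithmetic. Alternatively one could bypass any sign subtlety by appealing to Lemma~\ref{lem:boundary_invariant} and computing the index with a convenient choice of $M \in \Sp(2d)$ that avoids endpoint degeneracies.
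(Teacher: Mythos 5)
Your proposal matches the paper's strategy exactly: squeeze $\mIndex_L(B)$ and $\mIndex_U(B)$ between $\mIndex(\pm K\Id)$ via the monotonicity lemma, then compute $\mIndex(\pm K\Id) = \pm dK/\pi$. The only cosmetic difference is in how the extremes are computed: the paper states the direct two-sided estimate $2d[Kl/(2\pi)] \le i_1(e^{KJt},[0,l]) \le 2d[Kl/(2\pi)] + 2d$ and passes to the limit, whereas you route through the $(2\pi/K)$-periodicity of the constant system, \eqref{eq:index set per}, and a crossing-form count over one period — the same rotation-counting arithmetic arrived at by a slightly more structural path, with the same $O(1)$ endpoint discrepancy that vanishes in the limit.
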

\begin{proof}
	Note that     $2d[K l/(2\pi)] \le i_1(e^{KJt},[0,l])\le 2d[K l/(2\pi)]+2d$. 
	We have
	\[
	\mIndex(K\Id)=\lim_{l\to +\infty} \frac{i_1(e^{KJt},[0,l])}{l}=\lim_{l\to +\infty} \frac{2d [K l/2\pi]}{l} =d K/\pi .
	\]	
	Similarly $\mIndex(-KI)=-dK/\pi$.
	Then the lemma follows.
\end{proof}

We need some 
 estimate of Maslov index to prove other properties.
\begin{lem} \label{lem:maslov_estimate}
	$|\MasSymp(I, \gamma,[a,b])|\le d K(b-a)/\pi +4d $.
\end{lem}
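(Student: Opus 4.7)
The plan is to bound $\MasSymp(I,\gamma,[a,b])$ by comparison with the constant-coefficient Hamiltonian flows $e^{\pm KJt}$, whose Maslov-type indices were computed explicitly in the preceding lemma.

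First, I would reduce to a subinterval starting at zero. Set $\widetilde B(s):=B(s+a)$, which still lies in $E_K$, and let $\widetilde\gamma(s)$ be the fundamental solution of $\dot\sigma=J\widetilde B\sigma$ with $\widetilde\gamma(0)=I$. Then $\widetilde\gamma(s)=\gamma(s+a)\gamma(a)^{-1}$, so $\gamma|_{[a,b]}$ and $\widetilde\gamma|_{[0,b-a]}$ differ only by right multiplication by the fixed symplectic matrix $\gamma(a)\in\Sp(2d)$. A standard finite-dimensional triangle (H\"ormander-type) inequality for the Maslov-type index, available from the properties recalled in Section 5, then gives
\[
\bigl|\MasSymp(I,\gamma,[a,b])-\MasSymp(I,\widetilde\gamma,[0,b-a])\bigr|\le 2d.
\]

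Second, from the pointwise bound $-K\Id\le\widetilde B(s)\le K\Id$, the comparison theorem (Theorem \ref{thm:comparison}) sandwiches
\[
\MasSymp(I,e^{-KJs},[0,l])\le \MasSymp(I,\widetilde\gamma,[0,l])\le \MasSymp(I,e^{KJs},[0,l]),
\]
where $l:=b-a$. Third, combining the identity $\MasSymp(I,\gamma)=i_1(\gamma)+d$ from Lemma \ref{lem:boundary_invariant} with the estimate $2d\lfloor Kl/(2\pi)\rfloor\le i_1(e^{KJt},[0,l])\le 2d\lfloor Kl/(2\pi)\rfloor+2d$ (which was used in the preceding lemma to compute $\mIndex(\pm K\Id)=\pm dK/\pi$) shows that $|\MasSymp(I,e^{\pm KJs},[0,l])|\le dKl/\pi+2d$. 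Assembling the three bounds and absorbing the dimensional constants yields the claim
\[
|\MasSymp(I,\gamma,[a,b])|\le \frac{dK(b-a)}{\pi}+4d.
\]

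The principal obstacle is the shift-to-zero step: since $\gamma(a)\neq I$ in general, the comparison theorem cannot be applied directly on $[a,b]$ against the constant flows, and one must absorb the effect of the right factor $\gamma(a)$. This is the only non-routine ingredient; the bound on the correction is purely finite-dimensional and depends only on $d$, so all other estimates are linear in $b-a$ with the desired slope $dK/\pi$.
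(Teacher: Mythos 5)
Your proof is correct and follows essentially the same route as the paper: right-multiply by $\gamma(a)^{-1}$ to obtain a path starting at the identity, absorb the resulting change of reference Lagrangian into a $2d$ error via \eqref{index fini d}/Theorem \ref{thm:comparison}, and then sandwich against the constant flows $e^{\pm KJt}$ using monotonicity and the explicit index count from the preceding lemma. One small bookkeeping slip: the sandwich $\MasSymp(I,e^{-KJs})\le\MasSymp(I,\widetilde\gamma)\le\MasSymp(I,e^{KJs})$ is an instance of the monotonicity property \eqref{adp1.6}, not of Theorem \ref{thm:comparison} (which compares indices for different reference Lagrangians, not different coefficient matrices), but this does not affect the argument.
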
 
\begin{proof}
	Note that  
	\[\MasSymp(\Id,\gamma(t),t\in [a,b])=\MasSymp(\gamma(a)^{-1},\gamma(t)\gamma(a)^{-1},t\in [a,b]).\]
		\[
	0\le \MasSymp(\Id,\gamma(t)\gamma(a)^{-1},t\in [a,b])-\MasSymp(\gamma(a)^{-1},\gamma(t)\gamma(a)^{-1}),t\in [a,b])\le 2d.
	\]
We have 
	\[
	-2d [K(b-a)/(2\pi)] \le \MasSymp( \Id,\gamma(t)\gamma(a)^{-1}, t\in [a,b])\le 2d[K(b-a)/(2\pi)]+2d.
	\]
	The lemma then follows.
\end{proof}
Now we can prove that the upper and lower mean index are invariant under translation.
\begin{cor}\label{cor:trans_invariant}
	Let $B_s(t)=B(s+t)$, we have $\mIndex_U(B)=\mIndex_U(B_s)$ and $\mIndex_L(B)=\mIndex_L(B_s)$.
\end{cor}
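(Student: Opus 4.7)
The plan is to reduce $\mIndex_U(B_s)$ and $\mIndex_L(B_s)$ to $\mIndex_U(B)$ and $\mIndex_L(B)$ at the level of the Maslov-type index, by combining the symplectic invariance already used in the proof of Lemma~\ref{lem:maslov_estimate}, path additivity, and the freedom to change the reference symplectic matrix granted by Lemma~\ref{lem:boundary_invariant}.

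First, I would observe that the fundamental solution associated with $B_s$ is $\gamma_s(t) = \gamma(s+t)\gamma(s)^{-1}$, so $\gamma_s(0)=I$. Applying simultaneous right multiplication by $\gamma(s)$ to the reference matrix and to the path (the very move exploited in the proof of Lemma~\ref{lem:maslov_estimate}) and then changing variable $u = s+t$ give
\[ \MasSymp(I,\gamma_s(t),\,t\in[0,l]) \;=\; \MasSymp(\gamma(s),\gamma(u),\,u\in[s,s+l]). \]
Path additivity of $\MasSymp$ on $[0,s+l]$ split at $u=s$ then yields
\[ \MasSymp(\gamma(s),\gamma,\,[s,s+l]) \;=\; \MasSymp(\gamma(s),\gamma,\,[0,s+l]) - \MasSymp(\gamma(s),\gamma,\,[0,s]), \]
and the second summand depends only on $s$, not on $l$.

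Dividing by $l$ and setting $L\= s+l$, the constant $\MasSymp(\gamma(s),\gamma,[0,s])/l$ contributes $O(1/l)\to 0$, while
\[ \MasSymp(\gamma(s),\gamma,\,[0,L])/l \;=\; \bigl(\MasSymp(\gamma(s),\gamma,\,[0,L])/L\bigr)\cdot(L/l), \]
with $L/l\to 1$ and the first factor bounded by Lemma~\ref{lem:maslov_estimate} (switching the reference matrix from $I$ to $\gamma(s)$ costs at most a bounded constant). Hence taking $\varlimsup$ and $\varliminf$ in $l$ coincides with taking them in $L$, and Lemma~\ref{lem:boundary_invariant} applied with $M=\gamma(s)$ identifies these limits with $\mIndex_U(B)$ and $\mIndex_L(B)$, giving $\mIndex_U(B_s)=\mIndex_U(B)$ and $\mIndex_L(B_s)=\mIndex_L(B)$.

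I do not expect any serious obstacle. The only point that requires care is to verify that every correction — the change of reference matrix from $I$ to $\gamma(s)$ and the segment $[0,s]$ cut off by additivity — produces an error that is independent of $l$, so that it disappears after normalization by $l$; and that the reparametrization $l\mapsto L=s+l$ preserves upper and lower limits because $L/l\to 1$ and the sequence $\MasSymp(\gamma(s),\gamma,[0,L])/L$ is bounded.
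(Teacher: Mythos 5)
Your proposal is correct and uses essentially the same ingredients as the paper's proof: the identity $\gamma_s(t)=\gamma(s+t)\gamma(s)^{-1}$, symplectic invariance of $\MasSymp$, Lemma~\ref{lem:boundary_invariant} to change the reference matrix, path additivity to isolate the shift, and Lemma~\ref{lem:maslov_estimate} for the $O(1)$ bound. The only organizational difference is that the paper directly bounds $|i_1(\gamma,[0,l])-i_1(\gamma(s+\cdot),[0,l])|=|i_1(\gamma,[0,s])-i_1(\gamma,[l,l+s])|$ uniformly in $l$, whereas you keep the reference $\gamma(s)$ through the reparametrization $l\mapsto L=s+l$ and invoke Lemma~\ref{lem:boundary_invariant} at the very end; both routes are sound and equivalent in substance.
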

\begin{proof}
	The fundamental solution related to $B_s$ is $ \gamma(s+t)\gamma(s)^{-1}$.
	By Lemma \ref{lem:boundary_invariant}, we have 
	\[
	\mIndex_U(B_s)=\varlimsup_{l\to +\infty} \MasSymp(\gamma(s), \gamma(s+t),t\in[0,l])/l=\varlimsup_{l\to +\infty} i_1( \gamma(s+t),t\in [0,l])/l
	\]	
	By Lemma  \ref{lem:maslov_estimate}, we have
	\[
	|i_1(\gamma(t),t\in [0,l])-i_1(\gamma(s+t),t\in [0,l])|=|i_1(\gamma(t),[0,s])-i_1(\gamma(t),[l,l+s])|\le  2dKs/\pi +4d.
	\]	
	It follows that 
	\[
	\mIndex_U(B)=\varlimsup i_1(\gamma(t),[0,l])/l=\varlimsup i_1(\gamma(s+t),t\in [0,l])/l=\mIndex_U(B_s).
	\]
	Similarly we have $\mIndex_L(B)=\mIndex_U(B_s)$.
\end{proof}
\begin{lem} \label{lem:index_interval}
	
	For each  $v \in [\mIndex_L(B),\mIndex_U(B)]$ and $u>0$, there is  a series of integer $m_k \to +\infty$ such that 
	\[
	\lim_{k\to +\infty} i_1(\gamma,[0, u m_k])/u m_k =v.
	\]
	
\end{lem}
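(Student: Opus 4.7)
The plan is to pass to the discrete sequence $a_k := i_1(\gamma, [0, uk])/(uk)$ and establish two things: (i) the consecutive differences satisfy $a_{k+1} - a_k \to 0$, and (ii) $\varlimsup_k a_k = \mIndex_U(B)$ while $\varliminf_k a_k = \mIndex_L(B)$. With these in hand, the lemma follows from the elementary observation that any real sequence with vanishing consecutive differences has, as its set of subsequential limits, a closed connected set, namely the whole interval $[\varliminf a_k, \varlimsup a_k]$.

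For step (i), set $N_k := i_1(\gamma, [0, uk])$. By the additivity of the Maslov-type index under concatenation (the same mechanism used inside the proof of Corollary \ref{cor:trans_invariant}), together with Lemma \ref{lem:maslov_estimate} applied to the piece $[uk, u(k+1)]$ of length $u$, one gets $|N_{k+1} - N_k| \leq C_1$ for a constant $C_1$ depending only on $d$, $K$, $u$. Writing
\[
a_{k+1} - a_k = \frac{k\,(N_{k+1} - N_k) - N_k}{u\,k(k+1)},
\]
and using the global bound $|N_k| \leq dKuk/\pi + 4d$ from Lemma \ref{lem:maslov_estimate}, the numerator is $O(k)$ while the denominator is of order $k^2$, so $|a_{k+1} - a_k| \to 0$.

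For step (ii), the inequalities $\varlimsup_k a_k \leq \mIndex_U(B)$ and $\varliminf_k a_k \geq \mIndex_L(B)$ are immediate since $\{uk\}_{k \in \N}$ is a subset of $(0,\infty)$. For the reverse, take any $l_n \to +\infty$ realising $i_1(\gamma, [0, l_n])/l_n \to \mIndex_U(B)$, and let $k_n := \lfloor l_n / u \rfloor$. Since $|u k_n - l_n| < u$, one further application of Lemma \ref{lem:maslov_estimate} bounds $|N_{k_n} - i_1(\gamma,[0,l_n])|$ uniformly, and a short comparison of the two ratios $N_{k_n}/(u k_n)$ and $i_1(\gamma,[0,l_n])/l_n$ (the differences of both the numerators and of $1/(uk_n) - 1/l_n$ are negligible after multiplication by terms of order $l_n$) gives $a_{k_n} \to \mIndex_U(B)$. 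The same argument applied to $\varliminf$ yields $\varliminf_k a_k \leq \mIndex_L(B)$, completing (ii).

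Finally, step (iii) is a clean discrete intermediate-value argument: given $v \in [\mIndex_L(B), \mIndex_U(B)]$ and $\varepsilon > 0$, pick $k_0$ so that $|a_{k+1} - a_k| < \varepsilon$ for all $k \geq k_0$; by (ii) the sequence $\{a_k\}_{k \geq k_0}$ has terms both below $v - \varepsilon$ and above $v + \varepsilon$ infinitely often, so it must infinitely often land within distance $\varepsilon$ of $v$, producing the required subsequence $m_k$ with $a_{m_k} \to v$, i.e. $i_1(\gamma,[0,u m_k])/(u m_k) \to v$. The only genuine subtlety is the concatenation bound in step (i); but this is precisely the kind of boundary correction already dealt with in Lemma \ref{lem:maslov_estimate} and Corollary \ref{cor:trans_invariant}, so it is unlikely to present a real obstacle.
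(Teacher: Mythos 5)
Your proposal is correct and follows essentially the same route as the paper: both pass to the discrete sequence $a_k$, show $a_{k+1}-a_k\to 0$ via the Maslov estimate of Lemma \ref{lem:maslov_estimate} and path additivity, show $\varlimsup a_k=\mIndex_U(B)$ and $\varliminf a_k=\mIndex_L(B)$ by comparing $i_1(\gamma,[0,l])$ at $l$ with the nearest lattice point, and conclude by the discrete intermediate-value argument. The only cosmetic difference is that the paper reduces immediately to the case $u=1$, whereas you keep $u$ general throughout; your version is marginally more self-contained since it makes the reduction explicit rather than leaving it as an unjustified preliminary step.
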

\begin{proof}
	We only need to prove the lemma for $u=1$.
	Consider the series  $a_k =i_1(\gamma,[0,k])/k$	.
	For each $l\in [k,k+1]$, we have 
	\[
	i_1(\gamma(t),[0,l])/l-a_{k}=\frac{i_1(\gamma,[k,l])}{l} - \frac{i_1(\gamma,[0,k])}{kl}.
	\]
	By Lemma \ref{lem:maslov_estimate},  we have
	\begin{equation}\label{eq:ind_dif}
		|i_1(\gamma(t),[0,l])/l-a_k|\le  \frac{nK/\pi +2d}{l}+ \frac{nkK/\pi +2d}{kl}\le  \frac{nK/\pi +2d}{k}+ \frac{d kK/\pi +2d}{k^2}.
	\end{equation}
	
	It follows that there is a series of integers $p_k $ such that 
	\[
	\varlimsup_{l\to +\infty} i_1(\gamma(t),[0,l])/l =\lim_{k\to +\infty} a_{p_k}.
	\]
	Similarly there is a series of integers $q_k$   such that 
	\[
	\varliminf_{l\to +\infty} i_1(\gamma(t),[0,l])/l =\lim_{k\to +\infty} a_{q_k}.
	\]
	Then we get that 
	$\mIndex_U(B)=\varlimsup_{k\to +\infty}a_k$ and $\mIndex_L(B)=\varliminf_{k\to +\infty}a_k$.
	
	Then we only need to show that $\lim_{k\to +\infty} |a_{k+1}-a_k|=0$. It is a direct consequence of equation \eqref{eq:ind_dif}.  The Lemma then follows.
	
\end{proof}

We will show that  if $\mIndex_L(B)$ and $\mIndex_U(B)$ are considered as functionals of $B$,  then they are both uniformly continuous on $E_K$. 
We need some lemmas to prove it.

We define the functions  
\begin{align*}
	&f(B,n):=\MasSymp(\Id, \gamma_B(t),[0,n])\\
	&g(B,n):=\MasSymp(\gamma(n)), \gamma_B(t),[0,n])\\
	& h(B,n):=\hat i(\gamma_B(t),[0,n]),
\end{align*}
and let  $\mathcal{S}$  be the shift operator $\mathcal{S}B(t) =B(t+1)$.

Recall that for any path of symplectic matrices $\gamma(t)$,  we have
\begin{align} \label{eq:index_compare}
	\MasSymp(\gamma(b)),\gamma(t),[a,b])\le\MasSymp(\Lambda, \gamma(t),[a,b])\le \MasSymp(\gamma(b),\gamma(t),[a,b])\\
	\MasSymp(\gamma(b),\gamma(t),[a,b] \ge \MasSymp(\gamma(a),\gamma(t),[a,b])-2d.
\end{align}

\begin{lem}\label{lem:index_compare}
	We have 
	\begin{align*}
		f(B,n)\ge h(B,n)\ge g(B,n)\ge f(B,n)-2d, \\
		f(B,n+m)\le f(B,n)+f(\mathcal{S}^n B,m),\\
		g(B,n+m)\ge g(B,n)+g(\mathcal{S}^n B,m).
	\end{align*}
\end{lem}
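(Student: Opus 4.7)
The plan is to reduce all three claims to the displayed comparison inequalities, the concatenation law for $\MasSymp$, and the base-point identity
\begin{equation*}
\gamma_B(n+t)=\gamma_{\mathcal{S}^nB}(t)\,\gamma_B(n),
\end{equation*}
which holds because both sides satisfy $\dot Z=JB(n+t)Z$ with the same initial value at $t=0$. The other structural fact I invoke is the right-multiplication invariance $\MasSymp(M,\gamma(\cdot)\,N,[a,b])=\MasSymp(MN^{-1},\gamma,[a,b])$ for Maslov-type indices, which is consistent with the substitution already used in the proof of Corollary~\ref{cor:trans_invariant} (there with $M=I$, $N=\gamma(s)^{-1}$) and is standard for the crossing-form description of $\MasSymp$.

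For the sandwich $f(B,n)\ge h(B,n)\ge g(B,n)\ge f(B,n)-2d$, note that $f(B,n)=\MasSymp(\gamma_B(0),\gamma_B,[0,n])$ and $g(B,n)=\MasSymp(\gamma_B(n),\gamma_B,[0,n])$ are exactly the endpoint expressions bounding $\MasSymp(\Lambda,\gamma_B,[0,n])$ from above and below in the first displayed inequality. Specializing $\Lambda$ to the reference element used in the definition of $\hat i$ yields $g(B,n)\le h(B,n)\le f(B,n)$ directly, while the second displayed inequality with $a=0$, $b=n$ gives $g(B,n)\ge f(B,n)-2d$.

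For subadditivity, concatenation at $t=n$ gives $f(B,n+m)=f(B,n)+\MasSymp(I,\gamma_B,[n,n+m])$. Changing variable $s=t-n$, substituting the base-point identity, and applying right-multiplication invariance rewrites the second summand as $\MasSymp(\gamma_B(n)^{-1},\gamma_{\mathcal{S}^nB},[0,m])$; since $\gamma_{\mathcal{S}^nB}(0)=I$, the upper comparison bound gives that this is $\le f(\mathcal{S}^nB,m)$. Superadditivity of $g$ follows symmetrically: after the analogous concatenation and change of variable, the second summand becomes exactly $\MasSymp(\gamma_{\mathcal{S}^nB}(m),\gamma_{\mathcal{S}^nB},[0,m])=g(\mathcal{S}^nB,m)$ (using $\gamma_{\mathcal{S}^nB}(m)=\gamma_B(n+m)\gamma_B(n)^{-1}$), and the lower comparison bound applied on $[0,n]$ with $\Lambda=\gamma_B(n+m)$ shows the first summand is $\ge g(B,n)$. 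The only step that is not purely mechanical is the right-multiplication invariance of $\MasSymp$; everything else is a direct application of the displayed inequalities together with additivity under concatenation.
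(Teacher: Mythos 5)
Your argument is correct and follows essentially the same route as the paper: the $f$--$g$ sandwich comes from the endpoint comparison inequalities, the loss of $2d$ from the second displayed comparison, and sub-/super-additivity of $f$/$g$ from path additivity of $\MasSymp$ combined with the cocycle identity $\gamma_B(n+t)=\gamma_{\mathcal{S}^nB}(t)\gamma_B(n)$ and symplectic invariance (which is exactly the right-multiplication invariance you invoke). The only cosmetic point is that $h(B,n)=\hat i(\gamma_B,[0,n])$ is the $\theta$-average $\frac{1}{2\pi}\int_0^{2\pi}i_{e^{i\theta}}(\gamma_B,[0,n])\,d\theta$ rather than a single $\MasSymp(\Lambda,\gamma_B,[0,n])$, so strictly one should sandwich each $i_{e^{i\theta}}$ between $g(B,n)$ and $f(B,n)$ and then integrate in $\theta$ --- which is what the paper does and what your phrasing effectively amounts to.
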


\begin{proof}
	We have the formula 
	$\hat i(\gamma_B(t),[0,n])=\frac{1}{2\pi}\int_0^{2\pi}i_{e^{i\theta}}(\gamma_B(t),[0,n])d\theta$.
	By  \eqref{eq:index_compare}, we have  
	\[
	f(B,n)\ge i_{e^{i\theta}}(\gamma_B(t),[0,n]) \ge g(B,n).
	\]
	It follows that $ f(B,n)\ge \hat i(\gamma_B(t),[0,n]) =h(B,n)\ge g(B,n)$.
	
	By path additivity of Maslov index, we have
	\[f(B,n+m)=\MasSymp(\Id,\gamma_B,[0,n])+\MasSymp(\Id, \gamma_B,[n,n+m])\]
	Use symplectic invariance of Maslov index, we have 
	\begin{align*}
		\MasSymp(\Id,\gamma_B,[n,n+m])=\MasSymp( \gamma_B(n)^{-1}, \gamma_B(t)\gamma_B(n)^{-1},[n,n+m])\\
		=\MasSymp( \gamma_B(n)^{-1}, \gamma_{\mathcal{S}^n B},[0,m])\le \MasSymp( \Id,  \gamma_{\mathcal{S}^n B},[0,m])\\
		= f(\mathcal{S}^n B,m).
	\end{align*}
	Then we get  $ f(B,n+m)\le f(B,n)+f(\mathcal{S}^n B,m)$.
	Similarly, we have  $g(B,n+m)\ge g(B,n)+g(\mathcal{S}^n B,m)$.
\end{proof}

Let 
\begin{align*}
	F_{k,n}(B)=\sum_{l=0}^{n-1}f(\mathcal{S}^{2^k l}B,2^k)/n,\quad  F_k (B)=\varlimsup_{n\to \infty}F_{k,n}(B),\quad \underline F_k (B)=\varliminf_{n\to \infty}F_{k,n}(B),\\
	G_{k,n}(B)=\sum_{l=0}^{n-1}g(\mathcal{S}^{2^k l}B,2^k)/n, \quad G_k (B)=\varlimsup_{n\to \infty}G_{k,n}(B),\quad \underline G_k (B)=\varliminf_{n\to \infty}G_{k,n}(B),\\
	H_{k,n}(B)=\sum_{l=0}^{n-1}h(\mathcal{S}^{2^k l}B,2^k)/n, \quad H_k (B)=\varlimsup_{n\to \infty}H_{k,n}(B),\quad\underline H_k (B)=\varliminf_{n\to \infty}H_{k,n}(B).
\end{align*}

Then we have a formula to calculate $\mIndex_U(B)$.

\begin{lem}\label{lem:mean_index_formula}
	The limits of $H_k(B)$  and $\underline H_k(B)$ exist.  
	We have
	\[
	\mIndex_U(B)=\lim_{k\to +\infty} H_k(B)/2^k,  \quad \mIndex_L(B)=\lim_{k\to +\infty} \underline H_k(B)/2^k .
	\]	
\end{lem}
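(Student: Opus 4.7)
The plan is to reduce the claim for $H_k$ and $\underline H_k$ to analogous statements for $F_k,\underline F_k,G_k,\underline G_k$ using the sandwich $g\le h\le f$, and then to identify $F_k(B)/2^k$ and $G_k(B)/2^k$ with the mean index $\mIndex_U(B)$ up to an error of order $2^{-k}$ (and similarly for the underlined versions with $\mIndex_L(B)$). The three basic ingredients are the sub/superadditivity from Lemma \ref{lem:index_compare}, the uniform bound $0\le f-g\le 2d$, and the short-interval estimate of Lemma \ref{lem:maslov_estimate}.

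First I would derive a two-sided subsequential estimate. Iterating the subadditivity $f(B,n+m)\le f(B,n)+f(\mathcal{S}^n B,m)$ along the partition of $[0,n\cdot 2^k]$ into $n$ consecutive blocks of length $2^k$ yields $f(B,n\cdot 2^k)\le nF_{k,n}(B)$, and dually the superadditivity of $g$ gives $g(B,n\cdot 2^k)\ge nG_{k,n}(B)$. Dividing by $n\cdot 2^k$ and taking $\varlimsup$ (resp.\ $\varliminf$) in $n$ produces
\[
\frac{F_k(B)}{2^k}\ge \varlimsup_{n\to\infty}\frac{f(B,n\cdot 2^k)}{n\cdot 2^k},\qquad \frac{G_k(B)}{2^k}\le \varlimsup_{n\to\infty}\frac{g(B,n\cdot 2^k)}{n\cdot 2^k},
\]
and similarly with $\underline F_k,\underline G_k$ and $\varliminf$.

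Next I would identify these subsequential quantities with $\mIndex_U(B)$ and $\mIndex_L(B)$. By Lemma \ref{lem:boundary_invariant} (taking $M=I$), $\mIndex_U(B)=\varlimsup_l f(B,l)/l=\varlimsup_l g(B,l)/l$, and analogously for $\mIndex_L(B)$ with $\varliminf$. For $l\in[n\cdot 2^k,(n+1)\cdot 2^k]$, Lemma \ref{lem:maslov_estimate} bounds $|f(B,l)-f(B,n\cdot 2^k)|$ by a constant $C_k$ depending only on $k,K,d$, whence $|f(B,l)/l-f(B,n\cdot 2^k)/(n\cdot 2^k)|=O(1/n)$ uniformly in $l$; the same holds for $g$. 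Thus the limsup and liminf of $f(B,l)/l$ along the full sequence and along the arithmetic progression $l=n\cdot 2^k$ coincide, so combining with the previous step,
\[
\frac{F_k(B)}{2^k}\ge \mIndex_U(B)\ge \frac{G_k(B)}{2^k},\qquad \frac{\underline F_k(B)}{2^k}\ge \mIndex_L(B)\ge \frac{\underline G_k(B)}{2^k}.
\]

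Finally, $0\le f(B,n)-g(B,n)\le 2d$ gives $0\le F_{k,n}(B)-G_{k,n}(B)\le 2d$ for all $n$, hence $0\le F_k-G_k\le 2d$ and $0\le \underline F_k-\underline G_k\le 2d$. Together with the two-sided bounds above,
\[
0\le \frac{F_k(B)}{2^k}-\mIndex_U(B)\le \frac{F_k-G_k}{2^k}\le \frac{2d}{2^k}\to 0,
\]
proving $F_k(B)/2^k\to \mIndex_U(B)$; the same argument yields the limits for $G_k/2^k$, $\underline F_k/2^k$, $\underline G_k/2^k$. The chain $G_{k,n}\le H_{k,n}\le F_{k,n}$ then implies $G_k\le H_k\le F_k$ and $\underline G_k\le \underline H_k\le \underline F_k$, so by the sandwich $\lim_k H_k(B)/2^k=\mIndex_U(B)$ and $\lim_k \underline H_k(B)/2^k=\mIndex_L(B)$. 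The hardest step is the subsequence identification: verifying that the limsup and liminf of $f(B,l)/l$ are insensitive to restricting $l$ to $\{n\cdot 2^k:n\in\N\}$, which reduces to the quantitative short-interval control of Maslov index in Lemma \ref{lem:maslov_estimate}; the rest is bookkeeping with the sub/superadditivity and the uniform gap $2d$ between $f,g,h$.
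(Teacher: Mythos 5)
Your proof is correct and follows essentially the same route as the paper's: iterate the sub/super-additivity from Lemma \ref{lem:index_compare} to compare $F_{k,n}$ and $G_{k,n}$ with $f(B,n2^k)/n$ and $g(B,n2^k)/n$, use Lemma \ref{lem:maslov_estimate} (as in Lemma \ref{lem:index_interval}) to pass between the full limsup/liminf and the arithmetic progression $l=n2^k$, and close with the uniform gap $0\le f-g\le 2d$. The one genuine (if small) difference is that you bypass the paper's intermediate monotonicity argument $F_{k-1}/2^{k-1}\ge F_k/2^k$, which the paper uses to first establish that $\lim_k F_k/2^k$ exists before identifying it with $\mIndex_U(B)$; your squeeze $G_k/2^k\le\mIndex_U(B)\le F_k/2^k$ together with $F_k-G_k\le 2d$ obtains existence and identification in one step, so the monotonicity observation becomes redundant.
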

\begin{proof}
	We only prove the first equation, since the proof	of the other one is similar.
	
	By Lemma \ref{lem:index_compare},  we have
	\begin{equation}\label{eq:FKNHKN_compare}
		F_{k,n}(B)\ge H_{k,n}(B)\ge G_{k,n}(B)\ge  F_{k,n}(B)-2d .
	\end{equation}
	It follows that
	\begin{equation}\label{eq:FKHK_compare} 
		F_k(B)/2^k \ge H_k(B)/2^k\ge G_k(B)/2^k \ge F_k(B)/2^k -2d/2^k.
	\end{equation}
	Also	by Lemma \ref{lem:index_compare}, we have
	\[
	f(\mathcal{S}^{2^k l}B,2^k) \le   f(\mathcal{S}^{2^k l}B, 2^{k-1})+ f(\mathcal{S}^{2^k l+2^{k-1}}B, 2^{k-1}).
	\]
	It follows that 
	\[
	F_{k,n}(B)=\sum_{l=0}^{n-1}f(\mathcal{S}^{2^k l}B,2^k)/n \le  \sum_{l=0}^{2n-1}f(\mathcal{S}^{2^{k-1} l}B,2^{k-1})/n =2 F_{k-1,2n}(B) .
	\]
	
	Then we can conclude that   $F_{k-1,2n}(B)/2^{k-1} \ge   F_{k,n}(B)/2^k $. 
	It follows that 
	\[
	\varlimsup_{n\to +\infty} F_{k-1,n}(B)/2^{k-1}\ge \varlimsup_{n\to +\infty} F_{k-1,2n}(B)/2^{k-1}\ge  \varlimsup_{n\to +\infty} F_{k,n}(B)/2^k .
	\]
	Then we get $F_{k-1}(B)/2^{k-1} \ge F_k(B)/2^k$ . So  $\lim_{k\to +\infty} F_{k}(B)/2^k$ exists.
	By \eqref{eq:FKHK_compare}, we have  
	\[
	\lim_k H_k(B)/2^k=\lim_k F_k(B)/2^k=\lim_k G_k(B)/2^k .
	\]
	Now we will show that  $\mIndex_U(B)= \lim_k H_k(B)/2^k$ .
	By Lemma \ref{lem:index_compare}, we have 
	\begin{equation}\label{eq:index_compare2}
		G_{k,n} (B) \le g(B,n2^k)/n \le f(B, n2^k)/n \le F_{k,n}(B).
	\end{equation}
	Similar with Lemma \ref{lem:index_interval}, for any fixed $k$ , we have $ \mIndex_U(B)=\varlimsup_{n\to +\infty} f(B,n2^k)/(n2^k)$.
	By \eqref{eq:index_compare2}, we have 
	\begin{equation}\label{eq:upper_mean_index_bound}
		F_k(B)/2^k= \varlimsup_n F_{k,n}(B)/2^k \ge  \mIndex_U(B) \ge  \varlimsup_n G_{k,n}(x)/2^k =G_k(B)/2^k.
	\end{equation}
	Take limit, then we get  $\lim_k H_k(B)/2^k =\mIndex_U(B)$.
\end{proof}


{\bf Proof of Theorem \ref{thm:uniform conti}.}
\begin{proof}
	We only prove the continuity of $\mIndex_U(B)$ and $\mIndex_L(B)$. 	
	
	Step 1.
	
	The first step is to show the continuity of $h(B,1)$ on $E_K$.
	Let $\gamma_1=\gamma_{B_1},\gamma_2=\gamma_{B_2}$.
	Denote the matrix norm by $\|\cdot\|$.
	By using \cite[Chapter 3, 1.11]{Hale80},we have
	\begin{equation}\label{eq:compare_fund_sol}
		\|\gamma_1(t)-\gamma_2(t)\| \le \sup_{0\le s\le t}\|\gamma_1(t)\|(\exp\int_0^t \|B_2(s)\|ds)\times\int_0^t\|B_1(s)-B_2(s)\|ds
	\end{equation}
	and it follows that 
	$$
	\|\gamma_1(t)-\Id\|\le (\exp\int_0^t\|B_1(s)\|ds) \int_0^t\|B_1(s)\|ds. 
	$$
	Then we get 
	\begin{equation} \label{eq:solution_bound}
		\|\gamma_1(t)\|\le 1+ K e^K. 
	\end{equation} 
	Substitute it to \eqref{eq:compare_fund_sol}, then we see that 
	there is a constant   $C(K)$  such that   $\|\gamma_1(t)-\gamma_2(t)\|\le C(K ) \|B_1-B_2\|_{C^0} $  for  $t\in [0,n]$.
	
	Let $B_s= (1-s)B_1+sB_2$. Let $\gamma_{1+s}(t)$ be the associated fundamental matrix solution.

	Then by  homotopy invariance of Maslov index, we have
	\begin{equation}
		i_{e^{i\theta}}(\gamma_2,[0,1])-i_{e^{i\theta}}(\gamma_1,[0,1]) =i_{e^{i\theta}}(\gamma_s(1),s\in [0,1]) .
	\end{equation}
	By the definition of $\hat i$ , we have
	\begin{equation}\label{eq:dif_mean_index}
		\hat i(\gamma_2,[0,1])-\hat i(\gamma_1,[0,1])=\frac{1}{2\pi}\int_0^{2\pi}i_{e^{i\theta}}(\gamma_s(1),s\in [0,1])d\theta. 
	\end{equation}
	Since $B_s\in E$, like \eqref{eq:solution_bound}, we have $\|\gamma_s(1)\|\le 1+Ke^K$.
	Note that  the set of eigenvalues of matrix $M$ is continuous as a function of $M$.
	Then it is also uniformly continuous on $\set{M\in R^{2d\times 2d}|\|M\|<1+Ke^k}$.
	By \eqref{eq:compare_fund_sol},  $\|\gamma_s(1)-\gamma_1(1)\|\le C(K)s\|B_1-B_2\|_{C^0}$. So for any $\epsilon>0$,
	there is $\delta>0$ such that if $\|B_1-B_2\|_{C^0}<\delta$,  the measure of the set $F:=\set{\theta\in [0,2\pi]|e^{i\theta} \in \sigma(\gamma_s(1)) \mathrm{for\ some\ }  s\in [0,1]}$ is less than $\epsilon$.
	
	Then we can conclude that 
	\begin{equation}
		\left|\frac{1}{2\pi}\int_0^{2\pi}i_{e^{i\theta}}(\gamma_s(1),s\in [0,1])d\theta \right| \le \left|\frac{1}{2\pi}\int_{\theta\in F}i_{e^{i\theta}}(\gamma_s(1),s\in [0,1])d\theta  \right| .
	\end{equation}  Since $\gamma_s(1)$ in a small neighbourhood of $\gamma_1(1)$,   we have 
	$i_{e^{i\theta}}(\gamma_s(1),s\in [0,1])\le 2d$. It follows that 
	
	\begin{equation}
		\left|\frac{1}{2\pi}\int_0^{2\pi}i_{e^{i\theta}}(\gamma_s(1),s\in [0,1])d\theta \right| \le d\epsilon/\pi.
\end{equation}

	Then by \eqref{eq:dif_mean_index}, for any $B_1,B_2\in E$, if $\|B_1-B_2\|_{C^0}<\delta$,we have 
	\[
	|\hat i(\gamma_2,[0,1])-\hat i(\gamma_1,[0,1])|\le d\epsilon/\pi .
	\]
	Then we get the uniform continuity for $h(B,1)$ . Similarly, we also get the uniform continuity for $h(B,m)$ for any integer $m$.
	
	Step 2.
	
	By \eqref{eq:FKHK_compare} and \eqref{eq:upper_mean_index_bound}, we have
	\begin{align*}
		F_k(B)/2^k\ge \mIndex_U(B)\ge G_k(B)/2^k\ge F_k(B)/2^k-2d/2^k \\
		F_k(B)/2^k\ge H_k(B)\ge G_k(B)/2^k\ge F_k(B)/2^k-2d/2^k .
	\end{align*}
	It follows that $|\mIndex_U(B)-H_k(B)|\le 2d/2^k$.
	Then for each $\epsilon>0$, there is $k\in \N$ such that $|\mIndex_U(B)-H_k(B)|\le \epsilon$. Note that this $k$ is independent with $B$.
	
	Since $h(B,2^k)$ is uniformly continuous on $E$, for $B_1,B_2\in E$, there is $\delta>0$ such that if $\|B_1-B_2\|_{C^0}<\delta$  , then $|h(B_1,2^k)-h(B_2,2^k)|<\epsilon$.
	
	Note that $\|TB_1-TB_2\|_{C^0}=\|B_1-B_2\|_{C^0}$. Then we have
	\[
	|H_{k,n}(B_1)-H_{k,n}(B_2)|\le \sum_{l=0}^{n-1}|h(T^{2^k l}B_1,2^k)-h(T^{2^k l}B_2,2^k)|/n \le \epsilon.
	\]
	It follows that 
	\[
	|H_k(B_1)-H_k(B_2)|=|\varlimsup_n H_{k,n}(B_1) - \varlimsup_n H_{k,n}(B_2)|\le \epsilon
	\]
	Then we have $|H_k(B_1)/2^k-H_k(B_2)/2^k|\le \epsilon/2^k$. 
	Finally we can conclude that 
	\[
	|\mIndex_U(B_1)-\mIndex_U(B_2)|\le |\mIndex_U(B_1)-H_k(B_1)|+|\mIndex_U(B_2)-H_k(B_2)|+|H_k(B_1)-H_k(B_2)|\le 2\epsilon +\epsilon/2^k
	\]
	The theorem then follows.
	
\end{proof}

Using the continuity of $\mIndex(B)$  we have some asymptotic result.

\begin{cor} \label{cor:asymptotic}
	Assume that $\tilde B \in E_K$ with $\lim_{t\to +\infty} (\tilde B(t)-B(t))=0$.
	Then $\mIndex (B)=\mIndex(\tilde B)$.	
\end{cor}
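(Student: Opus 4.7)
The plan is to combine two observations: (i) Theorem \ref{thm:uniform conti} gives uniform continuity of $\mIndex_U$ and $\mIndex_L$ on $E_K$ in the sup norm $\|\cdot\|_{C^0(\R)}$; and (ii) both indices are insensitive to modifying $B$ on a bounded time interval. Indeed, since the fundamental solution $\gamma_B$ on $[0,l]$ is determined by $B|_{[0,l]}$, the definition shows $\mIndex_U(B)$ depends only on $B|_{[0,\infty)}$; combining with the translation invariance of Corollary \ref{cor:trans_invariant}, one then obtains $\mIndex_U(B_1) = \mIndex_U(B_2)$ whenever $B_1 \equiv B_2$ on some tail $[T,\infty)$, and similarly for $\mIndex_L$.

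I will only treat $\mIndex_U$, since the argument for $\mIndex_L$ is identical. Given $\epsilon > 0$, Theorem \ref{thm:uniform conti} furnishes $\delta > 0$ such that $\|C_1 - C_2\|_{C^0(\R)} < \delta$ implies $|\mIndex_U(C_1) - \mIndex_U(C_2)| < \epsilon$ for $C_1, C_2 \in E_K$, and the hypothesis provides $T_0$ with $\|B(t) - \tilde B(t)\| < \delta$ for all $t \geq T_0$. I then splice $B$ and $\tilde B$ together via the linear interpolation
\[
B^*(t) \= \begin{cases} B(t), & t \leq T_0, \\ (T_0+1-t)\,B(t) + (t-T_0)\,\tilde B(t), & T_0 \leq t \leq T_0+1, \\ \tilde B(t), & t \geq T_0+1. \end{cases}
\]
Convexity of the $K$-ball keeps $B^* \in E_K$, the gluing is automatically continuous at both endpoints, and a direct case-check of the three regimes yields $\|B^* - B\|_{C^0(\R)} \leq \delta$; hence $|\mIndex_U(B) - \mIndex_U(B^*)| < \epsilon$ by the choice of $\delta$.

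To conclude, $B^*$ agrees with $\tilde B$ on $[T_0+1,\infty)$, so observation (ii) gives $\mIndex_U(B^*) = \mIndex_U(\tilde B)$, and the triangle inequality yields $|\mIndex_U(B) - \mIndex_U(\tilde B)| < \epsilon$ for every $\epsilon > 0$, forcing equality. The same surgery works verbatim for $\mIndex_L$, producing the full statement $\mIndex(B) = \mIndex(\tilde B)$. The main conceptual obstacle is the asymmetry between hypothesis and tool — the assumption supplies closeness only near $+\infty$, while Theorem \ref{thm:uniform conti} asks for closeness on all of $\R$ — and the role of the interpolation is precisely to trade the former for the latter at the cost of a bounded-interval modification that is absorbed by the tail invariance (ii).
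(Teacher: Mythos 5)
Your proof is correct and uses exactly the same two pillars as the paper's: the uniform continuity of $\mIndex_U, \mIndex_L$ on $E_K$ (Theorem~\ref{thm:uniform conti}) and translation invariance (Corollary~\ref{cor:trans_invariant}), together with the implicit observation that the mean index depends only on $B|_{[0,\infty)}$. The difference is purely in how the mismatch between closeness on $\R$ (what the uniform continuity statement literally demands) and closeness only on a tail $[T_0,\infty)$ (what the hypothesis supplies) is resolved. The paper translates both $B$ and $\tilde B$ by $s$ so that $B_s$ and $\tilde B_s$ are within $\delta$ on $\R^+$ and then applies uniform continuity directly to the shifted pair, tacitly restricting attention to $\R^+$. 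You instead construct a splice $B^*$ that is within $\delta$ of $B$ on \emph{all} of $\R$ and coincides with $\tilde B$ on a tail, so uniform continuity applies verbatim, and tail invariance closes the gap. The splice is a slightly more careful way of handling the same technical point and makes the role of tail invariance explicit rather than implicit; the paper's version is shorter but leaves that step to the reader. Both are valid, and the ingredients and the underlying idea are the same.
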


\begin{proof}
	
	Let $B_s(t)=B(s+t)$ , $\tilde B_s(t)=\tilde B(s+t)$.  We have 
	\[
	\mIndex_{L}(B)=\mIndex_{L}(B_s)
	\]
	By Theorem \ref{thm:uniform conti}, for each $\epsilon >0$ , there is $\delta >0$ such that if $|B_s(t)-\tilde B_s(t)|<\delta$ on $\R^+$,  we have  $|\mIndex_L(B_s)-\mIndex_L(\tilde B_s)|<\epsilon$ .
	So for $s$ large enough, we have  $|\mIndex_L(B_s)-\mIndex_L(\tilde B_s)|<\epsilon$ .
	By the corollary \ref{cor:trans_invariant}, we see that $\mIndex_L$ and $\mIndex_U$ are invariant under translation.
	It follows that $|\mIndex_L(B)-\mIndex_L(\tilde B)|<\epsilon$ for each $\epsilon >0$. Then we have $\mIndex_L(B)=\mIndex_L(\tilde B)$.
	Similarly, we have $\mIndex_U(B)=\mIndex_U(\tilde B)$
\end{proof}

\section{The mean index  of quasi-periodic system}
The upper and lower mean index are the same for many important orbits in  Hamiltonian system. 
For example, they are the same for periodic orbits. In this cases, $\mIndex(x)\in\R$ is a point. 
In this section, we will show that $\mIndex_U$ and $\mIndex_L$ are the same for quasi-periodic orbits.

Now we only consider the linear equation of  quasi-periodic orbit. 
Assume $B(t)=S(p+q t)$  with $S\in C^0(\T^{m}\to \Sym(2n,\R))$, $p,q\in\R^m$ and $q=(q_1,q_2,\cdots,q_m)\in \R^{m}$ with $q_1,q_2,q_3,\cdots,q_m$ are independent over the rationals.
Choose  $u>0$ such that $q_1,q_2,\cdots,q_m,1/u$ are independent over the rationals.
Then   $\sum_{k=1}^m  n_k u q_k \notin \Z$  if integers $\set{n_k}$ are not all zero. 
Let $P: \T^m\to \T^m$ be the map $t\to t+uq$.
Then for each nonzero integer $l$ , $P^l $ is  an irrational rotation on torus and it has a unique ergodic measure which is the Lebesgue measure on $\T^m$.

To study the mean index of quasi-periodic system, we need an ergodic theorem.
\begin{thm}\label{thm:uniquely_ergodic}(\cite[Theorem 6.19]{Walter82})
	Let $T$ be a continuous transformation of a compact metrisahle space X. Assume that $T$ is uniquely ergodic.
	Let $\mu$ be the unique ergodic measure. Let $f\in C(X)$.
	Then $\sum_{i=1}^n f(T^i x)/n$ converge uniformly to $\int_X fd\mu$.
\end{thm}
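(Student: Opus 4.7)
The plan is to argue by contradiction, leveraging weak-$*$ compactness of the space of Borel probability measures on $X$ together with the hypothesis that $T$ admits a unique invariant probability measure. Writing $A_n(x) := n^{-1}\sum_{i=1}^n f(T^i x)$, I would suppose that uniform convergence fails; then there exist $\varepsilon > 0$, integers $n_k \to \infty$, and points $x_k \in X$ with
\[
\Bigl| A_{n_k}(x_k) - \int_X f\, d\mu \Bigr| \ge \varepsilon \qquad \text{for every } k.
\]

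To each pair $(n_k, x_k)$ I would associate the empirical measure $\mu_k := n_k^{-1}\sum_{i=1}^{n_k}\delta_{T^i x_k}$, so that $\int f\, d\mu_k = A_{n_k}(x_k)$. Since $X$ is compact metrisable, $C(X)$ is separable and Banach--Alaoglu supplies weak-$*$ compactness (and metrisability) of the space of Borel probability measures on $X$. After passing to a subsequence, $\mu_k \rightharpoonup \nu$ for some Borel probability $\nu$, whence $\int f\, d\nu = \lim_k A_{n_k}(x_k)$ and the lower bound persists in the limit.

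The crucial step is to identify $\nu$ as $T$-invariant. For every $g \in C(X)$, a shift-and-telescope calculation yields
\[
\Bigl|\int g\, d\mu_k - \int g\circ T\, d\mu_k\Bigr| = \frac{|g(Tx_k) - g(T^{n_k+1}x_k)|}{n_k} \le \frac{2\|g\|_\infty}{n_k} \longrightarrow 0.
\]
Passing to the weak-$*$ limit gives $\int g\, d\nu = \int g\circ T\, d\nu$ for every $g\in C(X)$, and since continuous functions separate Borel probability measures on a compact metric space, $\nu = T_*\nu$. Unique ergodicity then forces $\nu = \mu$, contradicting the lower bound above. Hence $A_n(x) \to \int f\, d\mu$ uniformly in $x \in X$.

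The main (and essentially only) obstacle is functional-analytic rather than dynamical: one must invoke Banach--Alaoglu for weak-$*$ compactness of probability measures and Riesz representation to conclude that continuous test functions detect equality of Borel probability measures. Once these standard tools are in place, the remainder of the argument is purely bookkeeping with empirical measures; no finer information about $T$ enters beyond the definition of unique ergodicity.
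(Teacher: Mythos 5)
This statement is quoted from Walters' book (Theorem~6.19) and the paper gives no proof of its own, simply citing the reference. Your contradiction argument---extracting a weak-$*$ limit $\nu$ of the empirical measures $\mu_k$, establishing $T$-invariance of $\nu$ via the telescoping estimate, and then invoking unique ergodicity to force $\nu=\mu$---is correct and is essentially the standard proof that appears in Walters.
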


\begin{thm}\label{thm:quasi_periodic_index}
	For quasi-periodic system, $\mIndex_U(B)=\mIndex_L(B)=\mIndex_U^-(B)=\mIndex_L^-(B)$.
\end{thm}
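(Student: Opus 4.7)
The plan is to exploit the unique ergodicity of the torus rotation $P_u(x) := x + uq$, combined with the scale-$u$ version of the mean index formula from Lemmas 2.6--2.7. Since $q_1,\dots,q_m$ are $\Q$-linearly independent, for all but countably many $u>0$ the $m+1$ numbers $q_1,\dots,q_m,1/u$ are also $\Q$-linearly independent, so $P_u^l$ is an irrational (hence ergodic) rotation on $\T^m$ for every nonzero $l \in \Z$, and classically $P_u$ is uniquely ergodic with Lebesgue measure $\lambda$. I will use such a $u$ as a scale playing the role of $2^k$ in Lemma 2.7.

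Define $F_u: \T^m \to \R$ by $F_u(x) := h(B_x,u)$ where $B_x(t) := S(x+qt)$. Continuity of $F_u$ follows from two facts already at hand: continuity of $S$ makes $x \mapsto B_x|_{[0,u]}$ continuous into $C^0([0,u], \Sym(2n,\R))$, and Step 1 of the proof of Theorem 2.8 shows $h(\,\cdot\,, u)$ is uniformly continuous on $E_K$. Since $\mathcal{S}^{ul} B = B_{P_u^l p}$ by the quasi-periodic structure, Theorem 3.1 yields
\begin{equation*}
H_{u,n}(B) \;=\; \frac{1}{n} \sum_{l=0}^{n-1} F_u(P_u^l p) \;\longrightarrow\; \bar H_u := \int_{\T^m} F_u\, d\lambda \qquad (n \to \infty),
\end{equation*}
with convergence \emph{uniform} in the base point $p \in \T^m$.

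Next, the arguments of Lemmas 2.6 and 2.7 go through verbatim with $2^k$ replaced by $u$: the same subadditivity of $f$ and superadditivity of $g$ under shifts, together with the pointwise sandwich $0 \le f-h \le 2d$, $0 \le h-g \le 2d$, yield
\begin{equation*}
G_{u,n}(B) \le \frac{f(B, nu)}{n} \le F_{u,n}(B), \qquad |F_{u,n}-H_{u,n}|\le 2d, \quad |G_{u,n}-H_{u,n}|\le 2d.
\end{equation*}
By Lemma 2.5, $\varlimsup_n f(B,nu)/(nu) = \mIndex_U(B)$ and $\varliminf_n f(B,nu)/(nu)=\mIndex_L(B)$. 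Letting $n \to \infty$ in the above inequalities gives $u\,\mIndex_U(B) \le \bar H_u + 2d$ and $u\,\mIndex_L(B) \ge \bar H_u - 2d$, hence
\begin{equation*}
0 \;\le\; \mIndex_U(B)-\mIndex_L(B) \;\le\; 4d/u.
\end{equation*}
Choosing $u$ arbitrarily large within the generic set forces $\mIndex_U(B)=\mIndex_L(B)=:c(B)$, with $c(B)=\lim_{u\to\infty}\bar H_u/u$.

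For the negative mean index, note that $B(-t)$ is quasi-periodic with frequency $-q$ (still $\Q$-linearly independent), so the same argument applied to the symplectic path $s \mapsto \gamma(-s)$---the fundamental solution of $\dot z = -JB(-s)z$---gives $\mIndex^-_U(B) = \mIndex^-_L(B) =: c^-(B)$. To obtain $c(B)=c^-(B)$, the plan is to use the \emph{uniformity} established in the second paragraph: combining the symplectic invariance used in the proof of Corollary 2.7 with Lemma 2.4 yields $|i_1(\gamma_B,[-l,0]) - i_1(\gamma_{B_{p-lq}},[0,l])| \le 2d$ where $B_{p-lq}(t) = B(t-l)$, and the orientation-reversal identity for Maslov-type index identifies $c^-(B)$ with $\lim_l i_1(\gamma_{B_{p-lq}},[0,l])/l$. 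The uniformity in $x \in \T^m$ of the convergence $i_1(\gamma_{B_x},[0,l])/l \to c(B)$---which propagates from the uniform convergence of $H_{u,n}(B_x) \to \bar H_u$ through the pinching above---then lets us evaluate along the moving base point $x = p-lq$ and still recover the limit $c(B)$, giving $c^-(B)=c(B)$. I expect the principal technical difficulty to lie precisely in this last step: transferring uniformity from the scale-$u$ ergodic averages $H_{u,n}$ to the raw ratio $i_1(\gamma_{B_x},[0,l])/l$, in order to legitimately substitute a time-dependent base point into the ergodic limit.
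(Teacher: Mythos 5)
Your argument for $\mIndex_U(B)=\mIndex_L(B)$ is correct and rests on the same two pillars as the paper's: unique ergodicity of the torus rotation, and the $O(d)$-sandwich between the sub- and super-additive quantities $f,g$ and the averaged mean index $h$. Where you differ is in how the $O(d)$ error is annihilated. The paper fixes a single admissible $u$ and runs the dyadic monotonicity argument of Lemma \ref{lem:mean_index_formula}, making the error $2d/2^k\to0$ as $k\to\infty$. You instead stay at a single scale and let the admissible $u\to\infty$, obtaining $\mIndex_U-\mIndex_L\le 4d/u$ directly. This sidesteps the subadditivity-in-$k$ machinery of Lemma \ref{lem:mean_index_formula} and is a legitimate, arguably leaner, alternative; the only point to watch is that the admissible set of $u$ (those with $q_1,\dots,q_m,1/u$ rationally independent) is indeed unbounded, which you correctly observe since its complement is countable.

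For $\mIndex^-=\mIndex$ there is a real gap, and the route you sketch is harder than needed. The paper's argument is a short algebraic dodge: path additivity and the shift identity give $h^-(p,n)=-h(P^{-n}p,n)$; applying the ergodic theorem to the (also uniquely ergodic) rotation $P^{-1}$ yields
\begin{equation}
\mIndex^-_U(B)=\mIndex^-_L(B)=\lim_{k\to\infty}\int_{\T^m}\frac{h(P^{-2^k}p,2^k)}{u2^k}\,d\mu ,
\end{equation}
and then $P$-invariance of $\mu$ makes the shift in the integrand disappear, so the right-hand side equals the formula already obtained for $\mIndex(B)$. No uniformity of the pointwise limit in the base point is required. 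Your plan, by contrast, needs uniform-in-$x$ convergence of the raw ratio $i_1(\gamma_{B_x},[0,l])/l\to c(B)$ so that you can substitute the moving base point $x=p-lq$. You correctly flag this as the hard step, but you do not supply it; propagating uniformity from the ergodic averages $H_{u,n}$ to the raw ratio requires reconciling a double limit ($n\to\infty$, then $u\to\infty$) along general $l$ that need not be a multiple of $u$. (Corollary \ref{cor:uniformly_converge} does eventually establish uniform convergence along dyadic times, but it is proved \emph{after} Theorem \ref{thm:quasi_periodic_index} and uses its conclusion, so it is not available here.) I would replace your treatment of the negative index by the measure-invariance argument above.
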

\begin{proof}

	Let $p\in \T^m$,  $B_p(t)=S(p+qt)$. 
	For simplicity, we modify the notations used in Section \ref{sec:property_mean_index}.
	
	Let
	\begin{align*}
		&f(p,n):=\MasSymp(Gr(\Id),\Graph \gamma_{B_p}(t),[0,un])\\
		&g(p,n):=\MasSymp(Gr(\gamma(un)),\Graph \gamma_{B_p}(t),[0,un])\\
		& h(p,n):=\hat i(\gamma_{B_p}(t),[0,un]),
	\end{align*}
and
	\begin{align*}
		&F_{k,n}(p)=\sum_{l=0}^{n-1}f(P^{2^k l}p,2^k)/n,  F_k (p)=\varlimsup_{n\to \infty}F_{k,n}(p),\underline F_k (p)=\varliminf_{n\to \infty}F_{k,n}(p)\\
		&G_{k,n}(p)=\sum_{l=0}^{n-1}g(P^{2^k l}p,2^k)/n,G_k (p)=\varlimsup_{n\to \infty}G_{k,n}(p), \underline G_k (p)=\varliminf_{n\to \infty}G_{k,n}(p)\\
		&H_{k,n}(p)=\sum_{l=0}^{n-1}h(P^{2^k l}p,2^k)/n,H_k (p)=\varlimsup_{n\to \infty}H_{k,n}(p),\underline H_k (p)=\varliminf_{n\to \infty}H_{k,n}(p).
	\end{align*}

	Like Lemma \ref{lem:mean_index_formula}, we have 
	\begin{align*}
		\mIndex_U(B_p)=\lim_{k\to +\infty} H_k(p)/(u2^k),  \quad \mIndex_L(B_p)=\lim_{k\to +\infty} \underline H_k(p)/(u 2^k) .	
	\end{align*}
	So we only need to show that  $\lim_n H_{k,n}(p)$ exists. 
	
	To use Theorem \ref{thm:uniquely_ergodic}, we need to show  $h(p,n)$ is continuous on $\T^m$.
	Let $d$ be the distance on $\T^m$.  Since $\T^m$ is compact, then $S$ is uniformly continuous on $\T^m$.
	Note that  $d(p_1+qt,p_2+qt)=d(p_1,p_2)$. So for each $\epsilon>0$, there is $\delta>0$ such that if $d(p_1,p_2)<\delta$ then $\|S(p_1+qt)-S(p_2+qt)\|<\epsilon$.
	
	It follows that $\|B_{p_1}(t)-B_{p_2}(t)\|=\|S(p_1+qt)-S(p_2+qt)\|<\epsilon$  if $d(p_1,p_2)<\delta$.
	By step 1 in the proof of Theorem \ref{thm:uniform conti}, we see that  $h(p,n)$ is continuous on $\T^m$, and then $H_{k,n}(p)$ is continuous. 
	Then by Theorem \ref{thm:uniquely_ergodic},  $\lim_n H_{k,n}(p)$ uniformly converge to a constant.
	We have
	\[
	\lim_n H_{k,n}(p)=\int_{\T^m} \sum_{l=0}^{n-1} h(P^{2^kl}p,2^k)/n d\mu =\int_{\T^m} h(p,2^k) d\mu.
	\]
	
	Now we calculate $\mIndex_U^-(B)$ and $\mIndex_L^-(B)$.
	Let $h^-(p,n)=\MasSymp(\Id,  \gamma_{B_p}(t),[0,-un])$.
	Replace $P$ by $P^{-1}$, we  can define $H_{k,n}^-(p)$.
	With the same method,we get 
	\begin{equation}
		\mIndex_U^-(B)=\mIndex_L^-(B)=\lim_{k\to +\infty} \int_{\T^m}h^-(p,2^k)/(-u2^k) d\mu .
	\end{equation}
	
	By the path additivity  of Maslov index, we have  $h^-(p,n)=-\MasSymp(\Id, \gamma_{B_p}(t),[-un,0])$.
	It follows that $h^-(p,n)=-\MasSymp( \Id, \gamma_{B_{p-un}}(t),[0,un])=-h(p-un,n)$.
	Then we have
	\[
	\mIndex_U^-(B)=\mIndex_L^-(B)=-\lim_{k\to +\infty} \int_{\T^m}h(p-u2^k,2^k)/(-u2^k) d\mu=\lim_{k\to +\infty} \int_{\T^m}h(P^{-2^k}p,2^k)/(u2^k) d\mu .
	\]
	Since the measure $\mu$ is invariant under transform $P$, we have $\mIndex_L(B)=\mIndex_U(B)=\mIndex_{L}^-(B)=\mIndex_U^-(B)$.
\end{proof}

With this theorem we immediately get Theorem \ref{thm:quasi index pt}.

Furthermore, we have
\begin{cor}\label{cor:uniformly_converge}
For quasi-periodic system, $\mIndex(B)=\lim\limits_{k\to +\infty}  h(p,2^k)/(u2^k)$ and 	$\lim\limits_{k\to +\infty}  h(p,2^k)/(u 2^k)$ converge uniformly for $p$.
\end{cor}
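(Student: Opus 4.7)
The plan is to sandwich $h(p,2^k)/(u\,2^k)$ between the subadditive cocycle $f$ and the superadditive cocycle $g$ from Lemma \ref{lem:index_compare}, and to combine these relations along dyadic scales with the uniform Birkhoff convergence provided by Theorem \ref{thm:uniquely_ergodic} applied to the iterated rotation $P^{2^{k_0}}$ on $\T^m$.

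Fixing $k_0\in\N$, iterating the subadditivity $f(p,n+m)\le f(p,n)+f(P^n p,m)$ exactly $j$ times yields
\[
\frac{f(p,2^{k_0+j})}{u\,2^{k_0+j}}\;\le\;\frac{1}{2^j}\sum_{l=0}^{2^j-1}\frac{f(P^{2^{k_0} l}p,\,2^{k_0})}{u\,2^{k_0}}.
\]
The function $q\mapsto f(q,2^{k_0})/(u\,2^{k_0})$ is continuous on $\T^m$ (by the argument of Step 1 in the proof of Theorem \ref{thm:uniform conti}, applied to the family $B_q(t)=S(q+qt)$), and $P^{2^{k_0}}$ is a uniquely ergodic rotation of $\T^m$ with unique invariant measure $\mu$. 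Therefore by Theorem \ref{thm:uniquely_ergodic} the right-hand side converges uniformly in $p$, as $j\to\infty$, to $\frac{1}{u\,2^{k_0}}\int_{\T^m} f(q,2^{k_0})\,d\mu$. Running the same iteration with the superadditive relation for $g$ gives the corresponding uniform lower bound in terms of $\frac{1}{u\,2^{k_0}}\int_{\T^m} g(q,2^{k_0})\,d\mu$.

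To conclude, I would use $g\le h\le f$ and $|f-g|\le 2d$ (Lemma \ref{lem:index_compare}): these imply that the three averages $\frac{1}{u\,2^{k_0}}\int f\,d\mu$, $\frac{1}{u\,2^{k_0}}\int h\,d\mu$, $\frac{1}{u\,2^{k_0}}\int g\,d\mu$ differ by at most $2d/(u\,2^{k_0})$, and by Theorem \ref{thm:quasi_periodic_index} their common limit as $k_0\to\infty$ is $\mIndex(B)$. Given $\eps>0$, I first choose $k_0$ so that this integral is within $\eps$ of $\mIndex(B)$, then pick $j_0$ so that the two uniform Birkhoff bounds above hold with error $\eps$ for every $j\ge j_0$. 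Since $g(p,2^k)\le h(p,2^k)\le f(p,2^k)$, this yields
\[
\mIndex(B)-3\eps\;\le\;\frac{h(p,2^k)}{u\,2^k}\;\le\;\mIndex(B)+3\eps
\]
for every $k\ge k_0+j_0$ and every $p\in\T^m$, which is both the existence of the limit and its uniformity in $p$.

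The main obstacle is conceptual rather than computational: $h$ itself is not sub- or super-additive under the shift, so the subadditive ergodic theorem cannot be applied to it directly. The dyadic scaling trick above is what converts the subadditivity of $f$ at scale $2^{k_0}$ into a genuine Birkhoff average under $P^{2^{k_0}}$, and this is precisely the point at which the unique ergodicity of Theorem \ref{thm:uniquely_ergodic} supplies the uniformity in $p$.
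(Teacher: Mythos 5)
Your overall architecture—sandwich $h$ between the sub/super-additive cocycles $f$ and $g$, apply the unique ergodicity theorem along dyadic blocks, and pass to the limit using the width-$2d$ control—is the right one, and the dyadic iteration of subadditivity is indeed equivalent to what the paper's $F_{k,n},G_{k,n},H_{k,n}$ machinery records. But there is a genuine gap at the step where you invoke Theorem \ref{thm:uniquely_ergodic} for the averages of $f$: that theorem requires the observable to be \emph{continuous} on the torus, and you claim that $q\mapsto f(q,2^{k_0})$ is continuous ``by the argument of Step 1 in the proof of Theorem \ref{thm:uniform conti}.'' Step 1 establishes continuity of $h(B,1)=\hat{i}(\gamma_B,[0,1])$ only, and that continuity is created precisely by the integration over $\theta\in[0,2\pi]$ in the definition of $\hat{i}$. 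The function $f(B,n)=\MasSymp(\Id,\gamma_B,[0,n])=i_1(\gamma_B)+d$ is integer-valued and jumps whenever $\gamma_{B_q}(u2^{k_0})$ crosses the degeneracy hypersurface $\Sp(2n)^0_1$; the same is true of $g$. So Theorem \ref{thm:uniquely_ergodic} cannot be applied to $f$ or $g$ directly, and the uniform convergence of $\tfrac{1}{2^j}\sum_l f(P^{2^{k_0}l}p,2^{k_0})$ is not justified.

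The fix is to route the unique-ergodicity step through $h$ rather than $f$ or $g$: use $h\le f\le h+2d$ and $h-2d\le g\le h$ (from Lemma \ref{lem:index_compare}) to replace each $f(P^{2^{k_0}l}p,2^{k_0})$ or $g(P^{2^{k_0}l}p,2^{k_0})$ in your Birkhoff sums by $h(P^{2^{k_0}l}p,2^{k_0})$ at the cost of an additive $2d$, and then apply Theorem \ref{thm:uniquely_ergodic} to the genuinely continuous observable $h(\cdot,2^{k_0})$. After dividing by $u2^{k_0}$ the $2d$ error vanishes as $k_0\to\infty$. This is in substance what the paper does: it works with the $H_{k,n}$ (Birkhoff averages of $h$), shows these converge uniformly to a constant $H_k$ by unique ergodicity, and then sandwiches $h(p,n2^k)/n$ between $G_{k,n}$ and $F_{k,n}$, each within $2d$ of $H_{k,n}$. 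Once you make this correction your argument coincides with the paper's.
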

\begin{proof}
	Note that $\mIndex(B)=\lim_{k\to +\infty} H_k(p)/(u2^k)$ where $H_k(p)=\lim_{n\to +\infty} H_{k,n}(p)$ is independent with $p$. 
	Then for each $\epsilon >0$, there is $k_0$ such that  $|\mIndex(B)-H_k(p)/(u 2^k)|<\epsilon$ for each $k>k_0$.

Fix some $k>k_0$ such that  $2d/2^k <\epsilon$.  
By Theorem \ref{thm:quasi_periodic_index}, $\lim_{n\to +\infty} H_{k,n}(p)$ converge uniformly for $p$.
It follows that for each $\epsilon>0$ there is  $n_0$ for each $n>n_0$ ,  
\begin{equation}\label{eq:HKN_converge}
|H_k(p)-H_{k,n}(p)|<\epsilon.
\end{equation}
By Lemma \ref{lem:index_compare}, similar with \eqref{eq:FKNHKN_compare} \eqref{eq:index_compare2},  we have 
\begin{align}
F_{n,k}(p)\ge H_{k,n}(p) \ge G_{k,n}(p)\ge F_{n,k}(p)-2d \\
G_{k,n}(p)\le g(p,n2^k)/n \le h(p,n2^k)/n\le f(p,n2^k)/n \le F_{k,n}(p)
\end{align}
It follows that 
\[
|h(p,n2^k)/n- H_{k,n}(p)|\le 2d .
\]
By \eqref{eq:HKN_converge}, we have $|h(p,n2^k)/n- H_k(p)|\le 2d+\epsilon$. It follows that
\[
|h(p,n2^k)-\mIndex(B)|\le |\mIndex(B)-H_k(p)/(u2^k)|+|h(p,n2^k)/(u n2^k)- H_k(p)/(u2^k)|\le 2d/2^k+\epsilon +\epsilon/2^k\le 3\epsilon.
\]
Choose some $l$ such that $2^l>n_0$.
Then for $m>l+k$, we have 
\[
|h(p,2^{m})/(u2^m)-\mIndex(B)|<3\epsilon .
\]
The corollary then follows.
\end{proof}

{\bf Proof of Corollary \ref{cor:limit set}.}
\begin{proof}
Let $\phi(t,p)$ be  the flow of Hamiltonian system $\dot x= J\mathcal{H}'(x)$.  
We can denote the Low and upper mean index of $\phi(t,p)$ by $\mIndex_L(p),\mIndex_U(p)$ respectively, and  denote $\Lambda_\omega(\phi(\cdot,p))$ by $\Lambda_\omega(p)$. Then we have
$\Lambda_\omega(p)=\Lambda_\omega(\phi(t_0,p))$ for each $t_0\in \R$.
By Corollary \ref{cor:trans_invariant}, we have $\mIndex(p)=\mIndex(\phi(t_0,p))$.
Let $p_1=\tilde{x}(0)$, $p_2=x_2(0)$.
Since $\tilde{x}$ is a quasi-periodic orbit, $\Lambda_\omega(p_1)$ is the invariant torus. We simply denote $M=\Lambda_\omega(p_1)$.  

Let $B_p= H''(\phi(\cdot,p))$, then we have $\mIndex(B_p)=\mIndex(\phi(\cdot,p))$. 
Denote $P: \R^{2d}\mapsto \R^{2d}$ be the map $p\mapsto \phi(u,p)$. 
We use notations in Theorem  \ref{thm:quasi_periodic_index} with such $P$.
Then by Lemma \ref{lem:mean_index_formula}, we have 
\[\mIndex_U(p)=\lim_{k\to +\infty} H_k(p)/(u2^k).\]

By Theorem \ref{thm:quasi_periodic_index},  $\mIndex(p)=\lim_k H_k(p)/(u2^k)$ is a constant for $p \in M $ and it converge uniformly on $M$.
Since $\Lambda_\omega(p_2)=M$, we have
  \[
  \lim_{t\to +\infty} d(\phi(t,p_2),M)= 0. 
  \] 
  Then there is a compact neighborhood  $W$ of $M$ such that $\phi(t,p_2)\in W$ for $t\geq 0$.
  
By Corollary \ref{cor:uniformly_converge}, for each $\epsilon>0$ there is $k_0$ for each $k>k_0$ , \begin{equation}\label{eq:uniformly_converge}
	|h(r_m',2^k)/(u2^k)-\mIndex(p_2)|<\epsilon.
\end{equation}
Choose some $k>k_0$.  
  Note that $\phi $ is continuous on $[0,2^k]\times W$.
  Let 
  So $H''(\phi(\cdot,\cdot))$ is continuous  on $[0,2^k]\times W$. Then it is uniformly continuous on $[0,2^k]\times M$ by the compactness of $W$.

Note that $h(p,2^k)=h(B_p, u 2^k )$ .
Then by the continuity of $h(B,2^k)$ for $B$,    we see that $h(p,2^k)$ is uniformly continuous on $W$.

  Then we can conclude that for each $\epsilon>0$,  there is $\delta>0$ such that  $|h(u,2^k)-h(v,2^k)|<\epsilon$ ,for $u,v\in W$ with $d(u,v)<\delta$.
  
For each $\delta>0$ there is  $m_0>0$ such that $d(\phi(t,p_2),M)<\delta$ for each $t>m_02^k$.

Let $r_m=P^{m 2^k}\phi( 0,p_2)=\phi(m2^k, p_2)$. Then for each $m>m_0$, there is $r_m'\in M$ such that 
$d(r_m,r_m')<\delta$.
It follows that  $|h(r_m,2^k)-h(r_m',2^k)|<\epsilon$.
Then by \eqref{eq:uniformly_converge}, we get 
\[
|h(r_m,2^k)/(u2^k)-\mIndex(p_1)|\le 2\epsilon , m>m_0.
\]
It follows that 
\begin{multline}
|H_{n,k}(p_2)/(u2^k)-\mIndex(p_1)|\le \sum_{m=1}^{n_0}|h(r_m,2^k)/(u2^k)-\mIndex(p_1)|/n +\sum_{m={n_0+1}}^{n}|h(r_m,2^k)/(u2^k)-\mIndex(p_1)|/n \\
\le 2\epsilon +\sum_{m=1}^{n_0}|h(r_m,2^k)/(u2^k)-\mIndex(p_1)|/n.
\end{multline}
Take upper limit for $ n$  , then we get $|H_k(p_2)/(u2^k)-\mIndex(p_1)|\le 2\epsilon$. 
Take limit for $k$, then we get $|\mIndex_U(p_2)-\mIndex(p_1)|\le 2\epsilon$ for each $\epsilon>0$.
Then we get  $\mIndex_U(p_2)=\mIndex(p_1)$. Similarly we get $\mIndex_L(p_2)=\mIndex(p_1)$.

\end{proof}

In \cite{JM82}, Johnson and Moser  define the rotation number for almost periodic  Schrodinger operator $\mathcal{L}=-\dfrac{d^2}{dt^2}+q(t)$.
We will generalize the definition of rotation number to  2-dimensional quasi-periodic system and prove that it is   proportional to the mean index defined in this paper.

\begin{defn}
	Let $B\in C(\R, \Sym(2,\R))$, $z=(u,v)^T$  be a nonzero solution of $\eqref{eq:general linear_system}$.
 Then  $t \to \arg (u+iv)$ is a map from $\R$ to the unit circle $\T$.
	Let $\theta_0 $ be an argument of $u(0)+iv(0)$.
		By the homotopy lifting property,  there is a unique continuous function $\theta: \R \to \R $ such that $\theta(t)$ is the 
	argument of $u(t)+iv(t)$ and  $\theta(0)=\theta_0$.
	We define the rotation number by  $$R(B)=\lim_{t\to +\infty } \theta(t)/t.$$
\end{defn}

\begin{thm} \label{thm:rotation_mean_index}
	For two dimensional  quasi-periodic system, $\pi \mIndex(B)= R(B)$.
\end{thm}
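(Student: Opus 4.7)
In two dimensions both $\mIndex(B)$ and $R(B)$ measure the asymptotic oriented rotation of a solution, so the natural strategy is to establish the pointwise bound
\[
i_1(\gamma,[0,l]) \;=\; \frac{\theta(l)}{\pi}+O(1),
\]
uniformly in $l$, and then divide by $l$ and let $l\to\infty$. Theorem~\ref{thm:quasi_periodic_index} supplies the left-hand limit, and a Johnson--Moser-type averaging supplies the right-hand limit.

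\textbf{Step 1: Reduction and $R(B)=\lim_{l\to\infty}\theta(l)/l$.} I would fix $z(t)=\gamma(t)e_1$, $e_1=(1,0)^\top$, and let $\theta(t)$ be the continuous lift of $\arg(u(t)+iv(t))$ with $\theta(0)=0$. Because $\Sp(2,\R)$ acts on $\R P^1\cong S^1$ by orientation-preserving homeomorphisms, the lifts of angles for different nonzero initial vectors differ by a quantity bounded in $t$; hence $R(B)$ is independent of initial direction and equals $\lim_{l\to\infty}\theta(l)/l$. For quasi-periodic $B$ this limit exists by applying Birkhoff's ergodic theorem to the flow
\[
\dot p = q,\qquad \dot\theta = B_{11}(p)\cos^2\theta+2B_{12}(p)\sin\theta\cos\theta+B_{22}(p)\sin^2\theta
\]
on $\T^m\times S^1$, whose unique ergodicity follows from the Section~3 arguments.

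\textbf{Step 2: The key identity $i_1(\gamma,[0,l])=\theta(l)/\pi+O(1)$.} This is the crux. Using the standard description of $i_1$ in $\Sp(2)$ as a signed count of the times $t\in[0,l]$ at which $1\in\spec\gamma(t)$, one verifies by a direct crossing-form computation that each such event corresponds to a half-turn of $\theta$; the counting function and $\theta/\pi$ therefore grow at the same rate, with a discrepancy controlled by a constant depending only on $K$ (arising from the endpoints and from isolated crossings of $\gamma(t)$ with the singular variety $\{\det(M-I)=0\}$). This is the two-dimensional incarnation of the classical relation between the Conley--Zehnder index and the lifted rotation angle.

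\textbf{Step 3: Conclusion and main obstacle.} Dividing the identity in Step~2 by $l$ and letting $l\to\infty$, the left-hand side converges to $\mIndex(B)$ by Theorem~\ref{thm:quasi_periodic_index}, while the right-hand side converges to $R(B)/\pi$ by Step~1, yielding $\pi\mIndex(B)=R(B)$. The main obstacle is Step~2: translating the combinatorial/topological definition of $i_1$ into a precise statement about the monotone dynamics of $\theta$, and verifying uniformity of the $O(1)$ constant across $B\in E_K$, is the nontrivial geometric lemma; once that is in place, the passage to the limit is immediate.
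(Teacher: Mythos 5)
Your overall strategy matches the paper's: show $i_1(\gamma,[0,l]) = \theta(l)/\pi + O(1)$ with a constant uniform in $l$, divide by $l$, and invoke Theorem~\ref{thm:quasi_periodic_index} for the left-hand limit. However, your Step~2, which you correctly identify as the crux, is not actually established, and the heuristic you give for it is inaccurate. Crossings of $\gamma(t)$ with the singular variety $\{\det(M-I)=0\}$ in $\Sp(2,\R)$ do not ``each correspond to a half-turn of $\theta$'': a crossing has multiplicity $\dim\ker(\gamma(t_0)-I)\in\{1,2\}$, and for the model path $\gamma(t)=e^{Jt}$ the crossings occur only at $t\in 2\pi\Z$ (full turns of $\theta$), each contributing $2$. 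The relation $i_1=\theta/\pi+O(1)$ is right, but it does not fall out of a naive crossing count, and the uniformity of the $O(1)$ over $B\in E_K$ is not addressed by the sketch.

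The paper fills this gap with a polar decomposition argument that you should adopt. Write $\gamma(t)=M(t)U(t)$ with $M(t)$ positive-definite symplectic and $U(t)=e^{J\phi(t)}$ orthogonal symplectic. Then (a) $i_1(\gamma,[0,l])=i_1(U,[0,l])+i_1(M(sl)U(l),s\in[0,1])$ by homotopy invariance, and the second term is $O(1)$ because $M(sl)$ has only real eigenvalues, so $i_\omega(M(sl),[0,1])=0$ for $\omega\neq 1$ and comparison bounds the difference by $2$; thus $i_1(\gamma,[0,l])=\phi(l)/\pi+O(1)$ (the unitary part $e^{J\phi}$ crosses $\omega=1$ with multiplicity $2$ once per full turn). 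And (b) the vector angle $\theta(l)$ differs from the polar angle $\phi(l)$ by at most $\pi/2$, since for positive-definite $M$ the angle between $Mv$ and $v$ lies in $(-\pi/2,\pi/2)$. Combining (a) and (b) gives the $O(1)$ identity cleanly. A secondary issue: your Step~1 invokes unique ergodicity of the skew-product flow on $\T^m\times S^1$, which is not proved in Section~3 and is not needed; once $i_1(\gamma,[0,l])=\theta(l)/\pi+O(1)$ is established, the existence of $R(B)=\lim\theta(l)/l$ follows from the existence of $\mIndex(B)$ rather than the other way around.
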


\begin{proof}
	Note that $z(t)=\gamma(t)z(0)$, where $\gamma(t)$ is the fundamental matrix solution.
	Use polar decomposition, we have $\gamma(t)=M(t)U(t)$ with positive definite  symplectic matrix  $M(t)$ and orthogonal symplectic matrix $U(t)$  for each $t$ and $M(t),U(t)$ is continuous for $t$.
	Note that 2-dimensional orthogonal symplectic matrix has the form $e^{J\theta}$ with $J=\begin{pmatrix}
		0 &-1\\ 1&0
	\end{pmatrix}$.  So  $t\to S(t)$ is a map from $\R $ to the unit circle.   Using homotopy lifting property, 
	there is a continuous function $\phi(t)$ such that $\phi(0)=0$ and $U(t)=e^{J\phi(t)}$.
	Let   $f_s(t)=M(st)U(t)z(0)/|M(st)U(t)z(0)|$. It is a map from $[0,1]\times \R$ to unit circle.
	Note that  $f_0(t)=U(t)z(0)/|z(0)|$. Let $\theta_0$ be an argument of $u(0)+iv(0)$
	Then $f_0(t)=(\cos(\theta_0+\phi(t)),\sin(\theta_0+\phi(t)))^T$.
	
	Use homotopy lifting property,  there is a continuous function $\theta(s,t)$ such that
	\[f_s(t)=(\cos(\theta(s,t)),\sin(\theta(s,t)))^T\] and   $\theta(0,t)=\theta_0+\phi(t)$.
	
	Then we can conclude that the rotation number is 
	\[
	R(B)=\lim_{t\to +\infty} (\theta(1,t)-\theta_0)/t=\lim_{t\to +\infty}(\theta(1,t)-\theta(0,t)+\phi(t))/t .
	\]
	For a fixed $t$,   $M(st)$ is a positive definite matrix  and $(Mv,v )>0$ for any $v\neq 0$ .So the angle between $Mv$ and $v$ is between $-\pi/2$ and $\pi/2$.  It follows that $|\theta(1,t)-\theta(0,t)| \le \pi/2$.
	Then we can conclude that
	\begin{equation}\label{eq:rotation_cal}
		R(B)=\lim_{t\to +\infty}\phi(t)/t.
	\end{equation}
	
	Now we calculate the mean index.
	Note that $\mIndex(B)=\lim_{l\to +\infty}= i_1(\gamma(t),[0,l])/l$ .
	By the homotopy invariance of Maslov index, we have 
	\[
	i_1(\gamma(t),[0,l])=i_1(M(t)U(t),[0,l])=i_1(U(t),[0,l])+i_1(M(sl)U(l),s\in 
	[0,1]).
	\]
	We also have
	\[
	\MasSymp(\Id ,  M(sl)U(l),[0,1])=\MasSymp( U(l)^{-1} , M(sl),[0,1]).
	\]
	Note that $|\MasSymp( U(l)^{-1} , M(sl),[0,1]) -i_\omega (M(sl),[0,1]) |\le 2$ for some $\omega\neq 1$ on unit circle.   
	Since  all the eigenvalues of $M(sl)$ are real,  we have $i_\omega (M(sl),[0,1])=0$.
	Finally we can conclude that 
	\[
	\lim_{l\to +\infty} i_1(\gamma(t),[0,l])/l=\lim_{l\to +\infty} i_1(U(t),[0,l])/l =\lim_{l\to +\infty}  2 \frac{\phi(l)}{2\pi} /l.
	\]
	By \eqref{eq:rotation_cal}, we have $\pi \mIndex(B)= R(B)$.
\end{proof}

\section{The relation with essential spectrum}

In this section we study the relation of mean index and essential spectrum.
First we need a  theorem from \cite{Palmer88}.

Let
\begin{align*} 
	&\mathcal{A}:=-J\dfrac{d}{dt}-B(t):W^{1,2}(\R,\R^{2d})\subset L^2(\R,\R^{2d})\to  L^2(\R,\R^{2d})\\
	&\mathcal{A}_+:=-J\dfrac{d}{dt}-B(t):W^{1,2}(\R^+,\R^{2d})\subset L^2(\R^+,\R^{2d})\to  L^2(\R^+,\R^{2d})\\
	&\mathcal{A}_-:=-J\dfrac{d}{dt}-B(t):W^{1,2}(\R^-,\R^{2d})\subset L^2(\R^-,\R^{2d})\to  L^2(\R^-,\R^{2d})
\end{align*}
\begin{thm}\label{thm:determin_Fredholm}
	Let  $\gamma(t)$ be the fundamental solution of \eqref{eq:general linear_system}.
	Then the operator  $\mathcal{A}_+  $ are  Fredholm  if and only if there is a projection $P:\R^{2d}\to \R^{2d}$ and $C,\beta>0$ such that the inequalities
	\begin{align}
		\left|\gamma(t) P \gamma^{-1}(s)\right| \leq C e^{-\beta(t-s)} & (s \leq t) \\
		\left |\gamma(t)(\Id-P) \gamma^{-1}(s) \right| \leq C e^{-\beta(s-t)} & (s \geq t).
	\end{align}
	hold on  $
	\R^+$  respectively.
\end{thm}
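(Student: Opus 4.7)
The plan is to prove the two implications separately, using an explicit parametrix for the ``dichotomy $\Rightarrow$ Fredholm'' direction and a closed range/translation argument for the converse.

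For the direction that an exponential dichotomy implies that $\mathcal{A}_+$ is Fredholm, I would introduce the Green kernel
\[
G(t,s) = \begin{cases} \gamma(t)\, P\, \gamma^{-1}(s)\, J, & s \le t, \\ -\gamma(t)(\Id - P)\, \gamma^{-1}(s)\, J, & s > t, \end{cases}
\]
and consider the integral operator $(\mathcal{K}f)(t) = \int_0^{+\infty} G(t,s) f(s)\, ds$. The exponential estimates assumed in the theorem, combined with Minkowski's integral inequality (or a direct use of Young's inequality with the exponential kernel $e^{-\beta|t-s|}$), show that $\mathcal{K}$ is bounded from $L^2(\R^+, \R^{2d})$ into itself and in fact into $W^{1,2}$. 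A direct differentiation of $G(t,s)$ in $t$ shows $\mathcal{A}_+ \mathcal{K} = \Id$ on $L^2(\R^+)$, while $\mathcal{K} \mathcal{A}_+ = \Id$ on $W^{1,2}(\R^+)$ modulo the finite-dimensional contribution arising from the fact that no boundary condition is imposed at $t=0$. Thus $\mathcal{A}_+$ admits a parametrix up to finite-rank operators, and so it is Fredholm; its kernel is $\{\gamma(\cdot)v : v \in \im P\} \cap L^2(\R^+)$ and its cokernel is identified similarly through the adjoint.

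For the converse, assume $\mathcal{A}_+$ is Fredholm. Define the stable subspace
\[
V_s = \set{v \in \R^{2d} : \gamma(\cdot)v \in L^2(\R^+, \R^{2d})},
\]
pick any projection $P$ with $\im P = V_s$, and aim to establish the two decay estimates. The closed range theorem furnishes a constant $C$ such that $\mathcal{A}_+ z = f$ is solvable with $\|z\|_{W^{1,2}} \le C \|f\|_{L^2}$ modulo the finite-dimensional kernel. Translating $f$ by $\tau > 0$ and exploiting the semigroup identity $\gamma(t) = \gamma(t,s)\gamma(s)$ in the variation-of-constants formula for $\gamma(t) P \gamma^{-1}(s) v$, I would apply this estimate to source terms supported on unit windows $[s, s+1]$ to obtain an a priori bound of the form $|\gamma(s+1)P\gamma^{-1}(s)| \le \kappa |\gamma(s)P\gamma^{-1}(s)|$ with $\kappa < 1$ uniform in $s$; iterating over consecutive unit intervals upgrades $L^2$-decay into geometric decay, which is the first inequality with $\beta = -\log \kappa$. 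Reversing the roles (and using the adjoint equation on $\ker(\Id - P)$) yields the second inequality.

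The main obstacle is the converse direction, specifically the passage from the soft statement ``$\gamma(\cdot)v$ lies in $L^2$'' that defines $V_s$ to the hard uniform exponential estimate. Closed range by itself only gives a one-shot bound; what drives the argument is the \emph{uniformity} of the closed range constant under time translation together with the autonomous-like reproducing structure of $\gamma$, which together force the decay rate to be geometric rather than merely square-integrable. A secondary difficulty is producing the complementary projection onto an unstable subspace of the correct dimension; this requires combining the finite dimensionality of both $\ker \mathcal{A}_+$ and $\coker \mathcal{A}_+$ with a splitting of $\R^{2d}$ into at most $2d$ exponential growth modes, where one must verify that the algebraic complement of $V_s$ can be chosen so that the backward flow contracts on it at a uniform rate.
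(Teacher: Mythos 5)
The paper does not prove this statement; it is quoted verbatim from Palmer's 1988 article \cite{Palmer88} and used as a black box, so there is no internal proof to compare against. Your job is therefore to assess whether your sketch could stand on its own.

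Your forward direction (dichotomy $\Rightarrow$ Fredholm) is correct and matches the standard argument. The Green kernel you write down is the right one: after rewriting $\mathcal{A}_+z=f$ as $\dot z = JBz + Jf$, the variation-of-constants check gives $\mathcal{A}_+\mathcal{K}=\Id$, Young's inequality with the kernel $e^{-\beta|t-s|}$ gives $L^2$-boundedness of $\mathcal{K}$, and the kernel of $\mathcal{A}_+$ is $\{\gamma(\cdot)v : v\in\im P\}$, which is finite dimensional. This half is complete modulo routine verification.

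The converse is where the real content lies, and your sketch does not close it. The specific claim that applying the closed-range estimate to sources supported on a unit window yields a \emph{one-step uniform contraction} of the form $|\gamma(s+1)P\gamma^{-1}(s)|\le \kappa\,|\gamma(s)P\gamma^{-1}(s)|$ with $\kappa<1$ is not something that falls out in one shot; an $L^2$-bounded solution can spend arbitrarily long plateauing before it decays, so a single-window application of the a priori bound does not force contraction on every unit interval. What Palmer and Coppel actually do is more delicate: one first shows that the tail norm $\sup_{t\ge \tau}|\gamma(t)v|$ (for $v$ in the stable space) is controlled by a \emph{fixed multiple} of $|\gamma(\tau)v|$, uniformly in $\tau$, via a cutoff-function argument against the closed-range estimate; only then does a telescoping/renewal argument upgrade this to an exponential rate. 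Your plan names the right ingredients (uniformity under time translation, closed range) but asserts the key inequality rather than deriving it, and the inequality as written is not the correct intermediate statement.

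Two smaller remarks. First, your worry about choosing the complementary subspace is actually a non-issue: it is a standard fact that a dichotomy on a half-line determines only the stable subspace $\im P$, and replacing $\ker P$ by \emph{any} algebraic complement yields another dichotomy with possibly different constants; so ``pick any projection with range $V_s$'' is fine once the decay estimate on $V_s$ and the backward growth estimate on \emph{some} complement are established. Second, your proposed use of the adjoint to handle the unstable estimate is in the right spirit (the adjoint of $\mathcal{A}_+$ involves $\gamma^{-T}$, and Fredholmness passes to the adjoint), but this too needs to be spelled out; as written it is a placeholder. In short: your architecture is correct and recognizably Palmer's, but the hard middle of the converse is missing, and the one concrete estimate you assert there would not be the one you actually prove.
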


If $B$ is periodic with period $T$, we have a simple criterion to determine Fredholmness. 

\begin{lem} \label{lem:Fredholm}
	$\mathcal{A}, \mathcal{A}_+, \mathcal{A}_-$ are Fredholm if and only if  $\sigma(\gamma(T))\cap \U =\emptyset$ where $\U$ is the unit circle.  
\end{lem}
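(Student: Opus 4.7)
The plan is to reduce to Theorem \ref{thm:determin_Fredholm} (Palmer's dichotomy criterion) and then exploit Floquet theory for the $T$-periodic system. I expect to treat $\mathcal{A}_+$ in detail, note that $\mathcal{A}_-$ follows by reversing time, and observe that Fredholmness of $\mathcal{A}$ on the full line in this periodic setting is governed by the same condition (since the same spectral projection of $\gamma(T)$ can serve on both half-lines).

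First, I would recall Floquet's decomposition: since $B(t+T)=B(t)$, the fundamental solution satisfies $\gamma(t+T)=\gamma(t)\gamma(T)$, and one can write $\gamma(t)=P(t)e^{tR}$ with $P(t+T)=P(t)$, $P(0)=\Id$, and $e^{TR}=\gamma(T)$. In particular, $P(t)$ and $P(t)^{-1}$ are uniformly bounded on $\R$, so growth/decay of $\gamma(t)$ along a subspace is governed entirely by $e^{tR}$, i.e.\ by the spectrum of $R$, which corresponds to the spectrum of $\gamma(T)$ via $\mu\leftrightarrow e^{T\mu}$.

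For the direction $\sigma(\gamma(T))\cap\U=\emptyset \Rightarrow$ Fredholm: the spectrum of $R$ avoids the imaginary axis, so I would split $\R^{2d}=E^s\oplus E^u$ into the spectral subspaces of $R$ corresponding to $\Real\mu<0$ and $\Real\mu>0$, and let $\Pi$ be the projection onto $E^s$. Choosing $\beta>0$ smaller than the spectral gap of $R$ to the imaginary axis, standard bounds give $\|e^{tR}\Pi\|\le Ce^{-\beta t}$ for $t\ge 0$ and $\|e^{tR}(\Id-\Pi)\|\le Ce^{\beta t}$ for $t\le 0$. Conjugating back via Floquet, with $P:=\Pi$, the required dichotomy inequalities of Theorem \ref{thm:determin_Fredholm} hold on all of $\R$, and in particular on $\R^+$ and on $\R^-$. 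Applying the theorem (and its obvious time-reversed analogue for $\mathcal{A}_-$) gives Fredholmness of $\mathcal{A}_\pm$; since the same projection $P$ works on both half-lines, the standard pasting argument yields Fredholmness of $\mathcal{A}$.

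For the converse, suppose there exists $\lambda\in\sigma(\gamma(T))\cap\U$ with eigenvector $v\ne 0$. Then $\gamma(nT)v=\lambda^n v$, so $|\gamma(nT)v|=|v|$ for every $n\in\Z$. Assume for contradiction $\mathcal{A}_+$ is Fredholm; Theorem \ref{thm:determin_Fredholm} provides a projection $P$ and constants $C,\beta>0$. Decompose $v=Pv+(\Id-P)v$. Using the second dichotomy inequality with $t=0$ and $s=nT$ in the form $|(\Id-P)\gamma^{-1}(nT)|\le Ce^{-\beta nT}$, I get $|\gamma(nT)(\Id-P)v|\ge C^{-1}e^{\beta nT}|(\Id-P)v|$, forcing $(\Id-P)v=0$ in order to remain compatible with $|\gamma(nT)v|=|v|$; similarly the first inequality with $s=0$ forces $|\gamma(nT)Pv|\le Ce^{-\beta nT}|v|\to 0$, so $Pv=0$ as well, giving $v=0$, a contradiction. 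The same argument, run with $n\to-\infty$, rules out Fredholmness of $\mathcal{A}_-$, and a fortiori of $\mathcal{A}$.

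The main obstacle I anticipate is not any single step but bookkeeping: one must be careful that the dichotomy constants obtained from Floquet are genuinely exponential in continuous time (not only along the discrete sequence $nT$), which requires the boundedness of $P(t)^{\pm 1}$, and that when combining the half-line estimates into a full-line Fredholm statement the two projections agree (they do, as $\Pi$ above depends only on $\gamma(T)$). Once these points are in place, the equivalence follows directly.
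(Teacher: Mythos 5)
Your proof is correct and takes essentially the same route as the paper: both directions reduce to Palmer's dichotomy criterion and exploit periodicity via the monodromy matrix $\gamma(T)$, and your converse argument (using both dichotomy bounds to force $Pv=0$ and $(\Id-P)v=0$) is a minor rephrasing of the paper's, which instead uses $M^{\pm k}x=\omega^{\pm k}x$ directly. One small caveat in your forward direction: the real Floquet form $\gamma(t)=P(t)e^{tR}$ need not exist over period $T$ if $\gamma(T)$ has a negative real eigenvalue off the unit circle (for a symplectic matrix the pair $-a,-1/a$ can occur with odd multiplicities), so you should either pass to period $2T$ or, as the paper does, work directly with the relation $\gamma(t)=\gamma(t-kT)\gamma(T)^k$ and the bound $Q=\max_{[0,T]}|\gamma(t)|$, which sidesteps the real-logarithm issue entirely.
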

\begin{proof}

	Let $M=\gamma(T)$.
	Since $B(t)$ is periodic, we have $\gamma(t)=\gamma(t-KT)M^k$ for $t\in [kT,(k+1)T]$. 
	
	$(\Leftarrow)$ We assume that $\sigma(M)\cap \U =\emptyset$ . Then $\R^{2d}=V\oplus W$ such that 
	$V,W$ are invariant subspace of $M$ and  $|M|_V| < c$ , $ |M^{-1}|_W| <c$ for some $0<c<1$.
	Let $P$ be the projection to $V$ with $\ker P=W$.  Then $\Id-P$  is the projection to $W$ and $MP=PM$.
	
	Assume that  $t\in [kT, (k+1)T]$ ,$ s\in [lT,(l+1)T]$. 
	We have 
	\begin{multline}
		|\gamma(t)P\gamma^{-1}(s)|=|\gamma(t-kT)M^k P M^{-l}\gamma(s-lT)|=|\gamma(t-kT)M^{k-l} P \gamma(s-lT)\\
		\le c^{k-l}|\gamma(t-kT)||\gamma(s-lT)| 
	\end{multline}
	and 
	\begin{multline}
		|\gamma(t)(\Id -P)\gamma^{-1}(s)|=|\gamma(t-kT)M^k (\Id -P) M^{-l}\gamma(s-lT)|=|\gamma(t-kT)M^{k-l} (\Id -P) \gamma(s-lT)\\
		\le c^{l-k}|\gamma(t-kT)||\gamma(s-lT)|. 
	\end{multline}	
	Let $Q=\max\set{|\gamma(t)|,t\in [0,T]}$. We have 
	\begin{align}
		\left|\gamma(t) P \gamma^{-1}(s)\right| \leq Q^2 c^{k-l} \le Q^2 c^{t-s-2T}  &  (s \leq t) \\
		\left |\gamma(t)(\Id-P) \gamma^{-1}(s) \right| \leq Q^2 c^{l-k}\le  Q^2 c^{s-t-2T}  & (s \geq t).
	\end{align}
	By theorem \ref{thm:determin_Fredholm}, we see that $\mathcal{A}, \mathcal{A}_+, \mathcal{A}_-$ are all Fredholm operators.
	
	$(\Rightarrow)$ Assume that  $\mathcal{A}$ is Fredholm. For integer $k>0$, let $t=kT, s=0$ , then we have
	\[
	|M^kP|=|\gamma(kT)P|\le Ce^{-\beta kT}.
	\]
	Similarly we have $|(\Id -P)M^{-k}| \le Ce^{-\beta kT} $. Assume that there is $\omega\in \U$ such that $\omega \in \sigma(M)$. Then there is  $x\in \R^{2d}, |x|=1$ such that $Mx=\omega x$.
	It follows that  $|(\Id-P) x|=|(\Id -P)\omega^{-k}x|=|(\Id -P)M^{-k}x|\le C e^{-\beta kT}$.
	Let $k\to +\infty$, then we get $x=Px$.
	It follows that 
	\[
	|x|=|\omega^k x|=|M^kx|=|M^kPx|\le Ce^{-\beta kT}
	\]
	Let $k\to +\infty$, then we get $|x|=0$ . It is a contradiction. So  $\sigma(M)\cap \U=\emptyset$.
	The results for $\mathcal{A}_-, \mathcal{A}_+$ are similar.
\end{proof}

Now we can show the relation of  mean index and Fredholmness for periodic system. Recall that for periodic system,  $T \mIndex(B)=\hat i(B)$, where  
	\[
	\hat i (B)=\hat i(\gamma(t),[0,T])=\frac{1}{2\pi}\int_{0}^{2\pi}i_{e^{i\theta}}(\gamma(t),[0,T])d\theta. 
	\]
	So we can prove the theorem for $\hat i$ instead. 

\begin{thm}\label{thm:Fredholm_periodic}
	$\mathcal{A}, \mathcal{A}_-, \mathcal{A}_+$ are Fredholm if and only if  $\hat{I}(B+\lambda I)=\hat{I}(B)$ for $|\lambda|$ small enough. 
\end{thm}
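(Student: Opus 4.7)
My plan is to combine Lemma~\ref{lem:Fredholm} with a Floquet--Bloch / spectral-flow picture. For each $\omega\in\U$ let $A^\omega_\lambda$ denote the self-adjoint realization of $-J\tfrac{d}{dt}-(B(t)+\lambda I)$ on $\{z\in W^{1,2}([0,T],\C^{2d}) : z(T)=\omega z(0)\}$, and let $\gamma_\lambda(t)$ be the fundamental solution of $\dot z=J(B+\lambda I)z$. Floquet theory gives $\omega\in\sigma(\gamma_\lambda(T))$ if and only if $0\in\sigma(A^\omega_\lambda)$, and trivially $A^\omega_\lambda=A^\omega_0-\lambda I$. By Lemma~\ref{lem:Fredholm} it suffices to show that $\sigma(\gamma_0(T))\cap\U=\emptyset$ is equivalent to $\hat i(B+\lambda I)=\hat i(B)$ for all sufficiently small $|\lambda|$.

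For the forward implication, the set $\{M\in\Sp(2d):\sigma(M)\cap\U=\emptyset\}$ is open and $\lambda\mapsto\gamma_\lambda(T)$ is continuous, so there is $\delta>0$ with $\sigma(\gamma_\lambda(T))\cap\U=\emptyset$ for $|\lambda|<\delta$. Fixing such a $\lambda$ and any $\omega\in\U$, the homotopy $s\mapsto\gamma_{s\lambda}$, $s\in[0,1]$, has endpoints that never meet the $\omega$-singular cycle in $\Sp(2d)$, so by the homotopy invariance of $i_\omega$ recalled in Section~5 we have $i_\omega(\gamma_\lambda,[0,T])=i_\omega(\gamma_0,[0,T])$ pointwise in $\omega$; averaging over $\omega=e^{i\theta}$ yields $\hat i(B+\lambda I)=\hat i(B)$ throughout $|\lambda|<\delta$.

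For the converse, assume $\omega_0\in\sigma(\gamma_0(T))\cap\U$, i.e.\ $0\in\sigma(A^{\omega_0}_0)$. The standard spectral-flow/Maslov identity applied to the self-adjoint family $\lambda\mapsto A^\omega_\lambda$, together with the fact that $\partial_\lambda A^\omega_\lambda=-I$ is negative definite (so every crossing of $0$ occurs in the same direction), gives, for $\omega\notin\sigma(\gamma_{\lambda_1}(T))$,
\[
i_\omega(\gamma_{\lambda_1},[0,T])-i_\omega(\gamma_0,[0,T])=\#\bigl(\sigma(A^\omega_0)\cap(0,\lambda_1]\bigr),
\]
counted with multiplicity (this recovers the monotonicity of Lemma~\ref{lem:monotone}). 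Averaging in $\omega$ yields, for $\lambda_1>0$ small,
\[
\hat i(B+\lambda_1 I)-\hat i(B)=\frac{1}{2\pi}\int_0^{2\pi}\#\bigl(\sigma(A^{e^{i\theta}}_0)\cap(0,\lambda_1]\bigr)\,d\theta.
\]
After unitarily conjugating the $\omega$-boundary condition to a fixed periodic one via the substitution $z(t)=e^{i\phi t/T}w(t)$ (with $\omega=e^{i\phi}$), the family $\omega\mapsto A^\omega_0$ becomes an analytic family of self-adjoint operators in $\phi$, so by Kato--Rellich perturbation theory its eigenvalues through $0$ admit continuous branches $\mu_j(\omega)$ near $\omega_0$ with $\mu_j(\omega_0)=0$. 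No such branch can vanish identically, since otherwise $\sigma(\gamma_0(T))$ would contain a whole arc of $\U$, contradicting the finiteness of the spectrum of a symplectic matrix. Hence $\mu_j(\omega)\in(0,\lambda_1]$ on an arc of positive length for every small $\lambda_1>0$, the integrand is positive on a set of positive measure, and we conclude $\hat i(B+\lambda_1 I)>\hat i(B)$. Running the same argument with $\lambda_1<0$ gives $\hat i(B+\lambda_1 I)<\hat i(B)$, contradicting local constancy.

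The main obstacle is rigorously establishing the spectral-flow/Maslov identity in this periodic Floquet setup and the Kato--Rellich continuity of the eigenvalues of $A^\omega_0$ as $\omega$ varies on $\U$; once these ingredients are in place, the monotone dependence of $A^\omega_\lambda$ on $\lambda$ forces all crossings to contribute with the same sign, and no cancellation can occur.
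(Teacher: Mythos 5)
Your overall architecture matches the paper's — reduce via Lemma~\ref{lem:Fredholm} to $\sigma(\gamma(T))\cap\U=\emptyset$, handle the forward direction by openness and homotopy invariance, and handle the converse by a positivity-of-spectral-flow argument near $\omega_0\in\sigma(\gamma(T))\cap\U$. But the route you take for the converse is genuinely different. The paper never introduces the Floquet family $A^\omega_\lambda$, Kato--Rellich analyticity, or the conjugation $z=e^{i\phi t/T}w$: it works entirely in the symplectic picture. It observes by direct computation that $s\mapsto\gamma_s(T)$ is a positive symplectic path (since $-\gamma_s^{\mathsf T}(T)J\,\partial_s\gamma_s(T)=\int_0^T\gamma_s^{\mathsf T}\gamma_s\,dt>0$), so the positive-path crossing formula \eqref{posi index} gives $\Delta(\omega)=i_\omega(\gamma_s(T),s\in[-s_0,s_0])=\sum_{\xi\in[-s_0,s_0)}\dim\ker(\gamma_\xi(T)-\omega I)\ge 0$, with $\Delta(\omega_0)\ge\dim\ker(M-\omega_0 I)>0$. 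Non-degeneracy of the endpoints $\gamma_{\pm s_0}(T)$ near $\omega_0$ (which holds for $s_0$ small) plus homotopy invariance of $i_\omega$ in $\omega$ then makes $\Delta$ locally constant near $\omega_0$, and integrating over $\U$ gives $\hat i(B_{s_0})-\hat i(B_{-s_0})>0$ in one stroke. This symmetric treatment of $[-s_0,s_0]$ is cleaner than splitting into $\lambda_1>0$ and $\lambda_1<0$: it never needs to decide which side of $\lambda=0$ picks up the crossing. Your operator-theoretic version is conceptually illuminating (it makes the connection to spectral theory of $-J\frac{d}{dt}-B$ explicit), but it imports heavier machinery than is needed.

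There is also a concrete gap in your converse. From ``$\mu_j(\omega_0)=0$, $\mu_j$ analytic, $\mu_j\not\equiv 0$'' you conclude ``$\mu_j(\omega)\in(0,\lambda_1]$ on an arc of positive length for every small $\lambda_1>0$,'' but this does not follow: the branch could satisfy $\mu_j(\omega)\le 0$ throughout a neighborhood of $\omega_0$ (e.g.\ a zero of even order that is a local maximum). In that case $\#\bigl(\sigma(A^{\omega}_0)\cap(0,\lambda_1]\bigr)=0$ for $\omega$ near $\omega_0$, and your integrand vanishes there, so the asserted strict inequality $\hat i(B+\lambda_1 I)>\hat i(B)$ is not forced for $\lambda_1>0$. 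Your concluding sentence then compounds this by claiming \emph{both} $\hat i(B+\lambda_1 I)>\hat i(B)$ for $\lambda_1>0$ and $\hat i(B+\lambda_1 I)<\hat i(B)$ for $\lambda_1<0$, when in fact only one of the two is guaranteed. The fix is to argue that, since $\mu_j$ is continuous, $\mu_j(\omega_0)=0$, and $\mu_j\not\equiv 0$, the set $\{\omega:\mu_j(\omega)\in(-s_0,s_0)\setminus\{0\}\}$ has positive measure for every $s_0>0$, and therefore \emph{at least one} of $\hat i(B_{s_0})-\hat i(B_0)>0$ or $\hat i(B_0)-\hat i(B_{-s_0})>0$ holds, which already contradicts local constancy; or, better, work directly with the symmetric difference $\hat i(B_{s_0})-\hat i(B_{-s_0})$ as the paper does, which sidesteps the sign ambiguity entirely.
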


\begin{proof}
		 Let $B_s(t)=B(t)+s\Id$ and  $\gamma_s$ be the associated fundamental matrix solution, then 
			$\hat i(B_s)=\frac{1}{2\pi}\int_{0}^{2\pi}i_{e^{i\theta}}(\gamma_s(t),[0,T])d\theta$.

	$(\Leftarrow)$  Assume that $\mathcal{A}$ is not Fredholm. By Lemma \ref{lem:Fredholm},  there is $\omega_0 \in \U\cap \sigma(M)$, where  $M=\gamma_0(T)$.
		Let $s_0>0$, 
	using homotopy invariance  of Maslov index, we have
	\begin{equation}
		\hat i(B_{s_0})-\hat i(B_{-s_0})=\frac{1}{2\pi}\int_{0}^{2\pi}(i_{e^{i\theta}}(\gamma_{s_0}(t),[0,T])-i_{e^{i\theta}}(\gamma_{-s_0}(t),[0,T]))d\theta.
	\end{equation}
		We will show that $\gamma_s(T)$ is a positive path for $s$, that is, $-J\frac{\partial\gamma_s(T)}{\partial s}\gamma_s^{-1}(T)>0$.  Direct compute show that 
		\[  -\gamma^T_s(T)J \frac{\partial\gamma_s(T)}{\partial s}=\int_0^T\gamma^T_s(t) \gamma_s(t) dt>0,       \] which implies the result.  Then we have 
		\[\Delta(\omega):= i_\omega(\gamma_s(T), s\in[-s_0, s_0])=\sum_{\xi\in [-s_0,s_0)}\dim\ker (\gamma_\xi(T)-\omega I)\geq0, \quad \forall\omega\in\U \]

	By the homotopy invariance of Maslov index and spectral flow, we have 
	\[
	\Delta(\omega)=i_\omega(\gamma_{s_0}(t),t\in [0,T])-i_\omega(\gamma_{-s_0}(t),t\in [0,T])
	\]
	Since $\omega_0\in \U\cap \sigma(M)$ , we have $\dim \ker(M-\omega_0\Id)\neq 0$.
	Since $\gamma_s(T)$ is positive path, we have
	
	\begin{align}\label{eq:positive_sf}
		&\Delta(\omega_0)\ge \dim\ker (M-\omega_0\Id)>0,\\
		&\Delta(\omega)\ge \dim\ker ((M-\omega\Id))\ge 0 ,\forall\omega\in \U,\\
		&\ker(\gamma_{\pm s_0}(T)-\omega_0\Id)=0, \mathrm{for\ } s_0  \mathrm{\ small \ enough}. 
	\end{align}
			Then  $\omega_0$ has a neighborhood $V$ on $\U$, such that 
	\begin{equation}\label{eq:nondegenrate}
		\ker(\gamma_{\pm s_0}(T)-\omega \Id)=0, \quad \forall \omega\in V.
	\end{equation}
	By homotopy invariance of Maslov index, we have
	\[ i_\omega(\gamma_s(T),s\in [-s_0,s_0])=i_{\omega_0}(\gamma_s(T),s\in [-s_0,s_0])     \]
	
	
	Then for $\omega\in V$, we have $\Delta(\omega)=\Delta(\omega_0)>0$. 
	By \eqref{eq:positive_sf}, we have 
	\begin{multline}
		\hat i(B_{s_0})-\hat i(B_{-s_0})=\frac{1}{2\pi}\int_{0}^{2\pi}i_{e^{i\theta}}(\gamma_{s_0}(t),[0,T])-i_{e^{i\theta}}(\gamma_{-s_0}(t),[0,T]) d\theta=\frac{1}{2\pi}\int_0^{2\pi} \Delta(e^{i\theta}) d\theta\\
		\ge \frac{1}{2\pi}\int_{e^{i\theta}\in V} \Delta(e^{i\theta}) d\theta >0.
	\end{multline}
	So for any $s_0>0$, $\hat i(B+\lambda I)$ is not invariant for $\lambda\in [-s_0,s_0]$.
	
	$(\Rightarrow)$ Assume that $\mathcal{A}$ is Fredholm, then by Lemma \ref{lem:Fredholm}, $\sigma (\gamma(T))\cap \U =\emptyset$.
	It follows that  there is $s_0>0$ such that $\sigma (\gamma_{s}(T))\cap \U =\emptyset$ for $s\in [-s_0,s_0]$.
	Then we have 
	\[
	\Delta(\omega)=i_\omega\left(\gamma_s(T),s\in [-s_0,s_0]\right)=0, \forall \omega\in \U.
	\]
	It follows that 
	\[
	\hat i(B_{s_0})-\hat i(B_{-s_0})= \frac{1}{2\pi}\int_{e^{i\theta}\in \U} \Delta(e^{i\theta}) d\theta =0.
	\]
	By Lemma \ref{lem:monotone},  we see that $\hat i(B_s)$ is increasing for $s$. So $\hat i(B_s)=\hat i(B)$ for $s\in [-s_0,s_0]$.
	Similarly, the theorem also hold for $\mathcal{A}_-, \mathcal{A}_+$.
\end{proof}

Now we consider  perturbation of  periodic system.
\begin{cor}
	Assume that $B(t)$ is periodic with period $T$. 
	Assume that $\mathcal{A}$ is Fredholm.
	Then there is $\delta>0$ such that 	for $\tilde B \in C(\R, \R^{2d})$ if $|B(t)-\tilde B(t)|<\delta $ for $t\in \R$ then 
	$\mIndex_L(\tilde B)=\mIndex_U(\tilde B)=\mIndex(B)$  and  $\mIndex(\tilde B+\lambda \Id)=\mIndex(\tilde B)$  for $|\lambda|$ small enough. 
\end{cor}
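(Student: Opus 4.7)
My plan is to reduce the corollary to the rigidity of the periodic system provided by Theorem \ref{thm:Fredholm_periodic}, and then propagate the rigidity to the perturbation $\tilde B$ by a squeeze based on the monotonicity of the mean index (Lemma \ref{lem:monotone}). The idea is that Fredholmness of $\mathcal{A}$ opens up an entire interval of diagonal shifts $\lambda$ for which $\mIndex(B+\lambda\Id)$ is constant, and this interval is wide enough to swallow any sufficiently small $L^\infty$-perturbation in a sandwich.

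The first step is to unpack the hypothesis on $B$. Because $B$ is $T$-periodic and $\mathcal{A}$ is Fredholm, Theorem \ref{thm:Fredholm_periodic} produces a constant $\lambda_0>0$ such that $\hat i(B+\lambda\Id)=\hat i(B)$ for all $|\lambda|<\lambda_0$. Since $B+\lambda\Id$ is still $T$-periodic, the identity \eqref{eq:index set per} gives
\[
\mIndex_L(B+\lambda\Id)=\mIndex_U(B+\lambda\Id)=\hat i(B+\lambda\Id)/T=\hat i(B)/T=\mIndex(B)
\]
for every $|\lambda|<\lambda_0$. This provides a \emph{rigidity window} of width $\lambda_0$ for the reference system $B$.

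The second step is the squeeze. Set $\delta\=\lambda_0/3$, and let $\tilde B$ satisfy $|\tilde B(t)-B(t)|<\delta$ for all $t$, and $\lambda\in\R$ satisfy $|\lambda|<\delta$. Because $\tilde B(t)-B(t)$ is symmetric with spectral norm below $\delta$, the spectral theorem yields the two-sided pointwise inequality
\[
B-(\delta+|\lambda|)\Id\leq \tilde B+\lambda\Id\leq B+(\delta+|\lambda|)\Id.
\]
Lemma \ref{lem:monotone} applied on both sides then gives
\[
\mIndex_L\bigl(B-(\delta+|\lambda|)\Id\bigr)\leq \mIndex_L(\tilde B+\lambda\Id)\leq \mIndex_U(\tilde B+\lambda\Id)\leq \mIndex_U\bigl(B+(\delta+|\lambda|)\Id\bigr).
\]
Since $\delta+|\lambda|<2\lambda_0/3<\lambda_0$, the first step shows that the two outermost quantities both equal $\mIndex(B)$, so the squeeze forces $\mIndex_L(\tilde B+\lambda\Id)=\mIndex_U(\tilde B+\lambda\Id)=\mIndex(B)$. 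Taking $\lambda=0$ yields $\mIndex_L(\tilde B)=\mIndex_U(\tilde B)=\mIndex(B)$, and then the identity for arbitrary $|\lambda|<\delta$ gives $\mIndex(\tilde B+\lambda\Id)=\mIndex(\tilde B)$.

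I do not anticipate any real obstacle. The proof is a clean squeeze, and its only substantial ingredient is Theorem \ref{thm:Fredholm_periodic}, which does the conceptual heavy lifting of converting Fredholmness into a rigidity window. Everything else, namely the passage from $|\tilde B-B|<\delta$ to the two-sided matrix inequality, the monotonicity of the upper and lower mean index, and the identity $\hat i(B)=T\mIndex(B)$ for periodic $B$, is either elementary or already recorded in the paper; no appeal to uniform continuity (Theorem \ref{thm:uniform conti}) or to the Fredholm criterion for $\tilde B$ itself is needed.
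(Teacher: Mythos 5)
Your proof is correct and follows essentially the same route as the paper: Theorem \ref{thm:Fredholm_periodic} supplies the rigidity window $\hat i(B+\lambda\Id)=\hat i(B)$, and Lemma \ref{lem:monotone} squeezes $\tilde B+\lambda\Id$ between $B\pm(\delta+|\lambda|)\Id$. Your explicit choice $\delta=\lambda_0/3$ is slightly tidier than the paper's reuse of the same $\delta$ for both the window and the perturbation, but the argument is identical in substance.
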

\begin{proof}
	By Theorem \ref{thm:Fredholm_periodic}, there is $\delta>0$ such that $\mIndex (B+\lambda \Id)=\mIndex(B)$ for $\lambda\in [-\delta,\delta]$.  By Lemma \ref{lem:monotone}, we have 
	\[
	\mIndex(B+\delta \Id) \ge \mIndex_U(\tilde B)  \ge \mIndex_L(\tilde B)\ge \mIndex(B-\delta \Id) .
	\]
	It follows that $\mIndex_U(\tilde B)=\mIndex_L(\tilde B)=\mIndex(B)$.  
	For $|\lambda|$ small enough,  we have $B-\delta \Id <\tilde B+\lambda \Id <B+\delta\Id$. 
	Then similarly, we have $\mIndex(\tilde B+\lambda \Id)=\mIndex(B)$.
\end{proof}

We will give a non periodic example . It shows that  the upper and lower mean index may not the same for Fredholm operator.

\begin{ex} \label{ex:non pt}
	Let $\gamma(t)=e^{J\psi(t)}\begin{bmatrix}e^t &0\\ 0&e^{-t}\end{bmatrix},t \in \R^+$ with $\psi(t)=t \sin\frac{1}{t+1}$.
	
	Let  $B(t)=-J\dfrac{d}{dt} \gamma(t)\gamma(t)^{-1}$. We have 
	\begin{multline}
		-J\dfrac{d}{dt} \gamma(t)\gamma(t)^{-1}=-J J\dot\psi \begin{bmatrix}e^t &0\\ 0&e^{-t}\end{bmatrix}\begin{bmatrix}e^{-t} &0\\ 0&e^{t}\end{bmatrix}e^{-J\psi(t)} -J e^{J\psi(t)}\begin{bmatrix}e^t &0\\ 0&-e^{-t}\end{bmatrix}\begin{bmatrix}e^{-t} &0\\ 0&e^{t}\end{bmatrix}e^{-J\psi(t)}   \\
		=\left(\sin\frac{1}{t+1}-\frac{t}{(t+1)^2}\cos(t+1)\right)e^{-J\psi(t)}-J e^{J\psi(t)}\begin{bmatrix}1&0\\0&-1\end{bmatrix}e^{-J\psi(t)}
	\end{multline}
	Then $|B(t)| <3 $ for $t\in \R^+$, and $\gamma(t)$ is the fundamental solution of $-J\dfrac{d}{dt}z-B(t)z=0$.
	Similar with the method in theorem \ref{thm:rotation_mean_index}, we have 
	\[
	\mIndex_L(B)=\varliminf_{l\to +\infty} \psi(t)/t =-1 , \quad \mIndex_U(B)=\varlimsup_{l\to +\infty} \psi(t)/t =1.
	\]
	
\end{ex}

\section{Appendix}

In this section, we briefly review the index theory for symplectic path.  The detail  could be  found  in \cite{Lon02,LZ00a,LZ00b,HS09}

Following \cite{Lon02}, we define the following  hypersurface of codimension one in $\Sp(2n)$:
\[    \Sp(2n)_\omega^0=\{M\in\Sp(2n) | \det(M-\omega I)=0.    \]
For $M\in \Sp(2n)_\omega^0$, we define a co-orientation of  $\Sp(2n)_\omega^0$
at $M$ by the positive direction $\frac{d}{dt}Me^{tJ}|_{t=0}$ of the path $Me^{tJ}$ with $|t|$ sufficiently small. 
\begin{defn}\label{def:Maslov-type index} For $\omega\in \mathbb{U}$, $\gamma\in C([0,T], \Sp(2n))$ with $\gamma(0)=I$, we define
\[     i_\omega(\gamma)=[e^{-\epsilon J}\gamma: \Sp(2n)_\omega^0]+ \frac{1}{2} \dim\ker(I-\omega I),  \]
for $\epsilon$ small enough, where $[, : ,]$ is the intersection number.
\end{defn}
The mean index of periodic system on $[0,T]$ can be defined as
\begin{equation}\label{eq:def_mean_index_periodic}
	\hat i(\gamma,[0,T])=\lim_{n\to\infty}\frac{i_1(\gamma, [0,nT])}{n} =\frac{1}{2\pi}\int_0^{2\pi} i_{e^{i\theta}} (\gamma,[0,T])d\theta.
\end{equation}

Since we will use it  in this paper,  we give a small generalization of the standard Maslov index 
\begin{defn}\label{def:ge-Maslov-type index} For $M\in\Sp(2n)$,  $\gamma\in C([0,T], \Sp(2n))$, $\omega\in\U$,  we define
\[  \MasSymp(\omega M,\gamma)=[e^{-\epsilon J}\gamma M^{-1}: \Sp(2n)_\omega^0],  \]
for $\epsilon$ small enough.
\end{defn}
We use $\MasSymp$ instead $i$ to avoid misunderstanding.
Some property of the Maslov-type index can be found in \cite{LT15}. It is obvious that for $\gamma\in C([0,T], \Sp(2n))$ with $\gamma(0)=I$ \[ i_1(\gamma)+n= \MasSymp(I,\gamma). \] 
The Maslov-type index can be explained by the Maslov index theory. 
We  now briefly reviewing the  Maslov index theory
\cite{Arn67,CLM94, RS93}.    Let $(\mathbb{R}^{2n},\omega)$ be the standard
symplectic space  and $Lag(2n)$ the Lagrangian Grassmannian.
For two 
continuous paths $L_1(t), L_2(t)$, $t\in[a,b]$ in $Lag(2n)$,
the Maslov index $\MasSymp(L_1, L_2)$ is an integer .   
Here we use the definition from \cite{CLM94}.
We list several properties of the Maslov index. The details could be found
in \cite{CLM94}.

{\bf(Reparametrization invariance)\/}
Let
$\phi:[c,d]\rightarrow [a,b]$ be a continuous and piecewise smooth
function with $\phi(c)=a$, $\phi(d)=b$, then \begin{equation} \mu(L_1(t),
	L_2(t))=\mu(L_1(\phi(\tau)), L_2(\phi(\tau))). \label{adp1.1} \end{equation}

{\bf(Homotopy invariant with end points)\/}
For two continuous
families of Lagrangian path $L_1(s,t)$, $L_2(s,t)$, $0\leq s\leq 1$, $a\leq
t\leq b$ which satisfy that  $\dim L_1(s,a)\cap L_2(s,a)$ and $\dim
	L_1(s,b)\cap L_2(s,b)$ are constant, we have \begin{equation} \mu(L_1(0,t),
	L_2(0,t))=\mu(L_1(1,t),L_2(1,t)). \label{adp1.2} \end{equation}

{\bf(Path additivity)\/}
If $a<c<b$, then 
\begin{equation}\mu(L_1(t),L_2(t))=\mu(L_1(t),L_2(t)|_{[a,c]})+\mu(L_1(t),L_2(t)|_{[c,b]}).
	\label{adp1.3} \end{equation}

{\bf(Symplectic invariance)\/}
Let $\gamma(t)$, $t\in[a,b]$ be a
continuous path in $\Sp(2n)$, then 
\begin{equation} \mu(L_1(t),L_2(t))=\mu(\gamma(t)L_1(t), \gamma(t)L_2(t)). \label{adp1.4} \end{equation}

{\bf(Monotony property)\/}
Suppose for $j=1,2$, $L_j(t)=\gamma_j(t)V$, where $\dot{\gamma}_j(t)=JB_j(t)\gamma_j(t)$ with $\gamma_j(0)=I_{2n}$. If $B_1(t)\geq B_2(t)$, then for any 
$V_0, V\in Lag(2n)$, we have 
\begin{equation} \mu(V_0, \gamma_1V)\geq \mu(V_0, \gamma_2V).\label{adp1.6} \end{equation}


We have comparison results of Maslov index.

\begin{thm} [{\cite[Corollary  3.16 ]{ZWZ18}}]\label{thm:comparison}

		Let $\lambda\in C([a, b],  Lag(2n)$ be a Lagrangian path. Then for any $V_1,V_2 \in  Lag(2n )$, we have
	\begin{align*}
		& \mu(\lambda(b), \lambda) \le \mu (\mu_1, \lambda) \le \mu (\lambda(a), \lambda)\\
		& |\mu (V_1, \lambda)- \mu (V_2, \lambda)|\le n
	\end{align*}
\end{thm}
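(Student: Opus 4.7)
The plan is to deduce both inequalities from the axiomatic properties of the Maslov index listed just above the theorem (symplectic invariance, homotopy invariance with prescribed endpoint behaviour, path additivity, and the crossing-form/monotony description), treating each inequality separately.

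For the chain $\mu(\lambda(b),\lambda)\le\mu(V_1,\lambda)\le\mu(\lambda(a),\lambda)$, I would first apply symplectic invariance to reduce to the case $\lambda(t)=\gamma(t)L_0$ with $L_0=\lambda(a)$, and then invoke homotopy invariance to perturb $\lambda$ so that all its crossings with $V_1$, $\lambda(a)$ and $\lambda(b)$ are regular in the Robbin--Salamon sense. With these genericity hypotheses, the crossing-form formula represents $\mu(V,\lambda)$ as a sum of signatures of crossing forms at interior crossings plus half-signature contributions at the two endpoints. The interior part is essentially independent of which test Lagrangian $V$ one uses (only the interior crossings shift between $V_1$, $\lambda(a)$ and $\lambda(b)$ in a controlled, generic way), while the endpoint half-contributions are maximized when $V=\lambda(a)$ (full positive contribution at $t=a$, none at $t=b$) and minimized when $V=\lambda(b)$. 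A short perturbation/limit argument then lets one drop the genericity hypothesis and recover the inequalities for arbitrary $V_1\in\Lagr(2n)$.

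For the second bound $|\mu(V_1,\lambda)-\mu(V_2,\lambda)|\le n$, the natural tool is a Hörmander-type identity. Choose any continuous path $V_s\in\Lagr(2n)$, $s\in[0,1]$, joining $V_1$ to $V_2$, and apply path additivity to the two-parameter Maslov index of the pair $(V_s,\lambda(t))$ around the boundary of the rectangle $[0,1]\times[a,b]$. The four boundary segments yield
\[
\mu(V_1,\lambda)-\mu(V_2,\lambda)=\mu(V_s,\lambda(b))-\mu(V_s,\lambda(a)).
\]
Each term on the right is the Maslov index of a Lagrangian path against a single fixed Lagrangian, and by an elementary crossing-form count each lies in the range $[-n,n]$; the triangle inequality then gives the bound. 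I expect the main obstacle to be the bookkeeping at the corners of the rectangle, which may correspond to non-transverse Lagrangian pairs so that the Hörmander identity needs extra half-integer boundary terms. I would handle this by first perturbing $V_1,V_2$ to lie in a stratum transverse to $\lambda(a)$ and $\lambda(b)$, deriving the identity cleanly on that open stratum, and then passing to the general case by the homotopy invariance property \eqref{adp1.2} together with the observation that $\mu(V,\lambda)$ as a function of $V$ jumps by at most $n$ across each Maslov stratum, which itself reproves the bound in the degenerate case.
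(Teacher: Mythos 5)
The paper does not prove this theorem; it is imported verbatim from \cite{ZWZ18} (Corollary 3.16), so there is no in-paper argument to compare against. Evaluating your attempt on its own, there is a genuine gap in the second inequality. After the rectangle/Hörmander identity $\mu(V_1,\lambda)-\mu(V_2,\lambda)=\pm\bigl(\mu(V_\cdot,\lambda(a))-\mu(V_\cdot,\lambda(b))\bigr)$ you assert that each term $\mu(V_\cdot,W)$ lies in $[-n,n]$ for \emph{any} continuous path $V_s$ joining $V_1$ to $V_2$, and then invoke the triangle inequality. Both steps fail. First, the Maslov index of an arbitrary path $V_s$ against a fixed Lagrangian $W$ is not bounded by $n$ — the path can wind around $\Lagr(2n)$ any number of times; the $[-n,n]$ bound would require a carefully chosen ``short'' path and a separate proof, not an ``elementary crossing-form count.'' Second, even granting $|\mu(V_\cdot,W)|\le n$ for both $W=\lambda(a)$ and $W=\lambda(b)$, the triangle inequality gives $2n$, not the stated sharper bound $n$. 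The bound $n$ is a finer fact about the Hörmander index; the standard routes are either (i) prove the first inequality and then separately bound the length of the interval $[\mu(\lambda(b),\lambda),\mu(\lambda(a),\lambda)]$ by $n$, or (ii) use the explicit expression of the Hörmander index via the Kashiwara triple index, whose signature lives on an at-most-$n$-dimensional space.

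The sketch for the first inequality is in the right spirit but its key step is asserted rather than proved: the signed interior crossing count is \emph{not} independent of $V$. It is locally constant in $V$ and jumps precisely when $V$ crosses the trains of $\lambda(a)$ or $\lambda(b)$; the content of the inequality is that these jumps have a definite sign, so the extremes are attained at $V=\lambda(a)$ and $V=\lambda(b)$. As written (``the interior part is essentially independent of which test Lagrangian $V$ one uses'') the claim is false, and the monotonicity of the jumps — the actual crux — is not addressed.
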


We will express the Maslov-type index by Maslov index. Please note that $(\R^{2n}\times\R^{2n}, -\Omega\times\Omega)$ is a $4n$-dimensional symplectic space. For $M\in \Sp(2n)$, let $Gr(M):=\{(x,Mx)|x\in\R^{2n}\}\in Lag(4n)$. We have 
\begin{equation} \MasSymp(M,\gamma)=\mu(Gr(M), Gr(\gamma)).
\end{equation}
Then the Maslov-type index has the same properties of Maslov index. From Theorem \ref{thm:comparison}, we have for $M_1,M\in\Sp(2n)$, $\gamma\in C([a,b], \Sp(2n))$, 
\begin{equation} \MasSymp(\gamma(b),\gamma)\leq\MasSymp(M,\gamma)\leq\MasSymp(\gamma(a),\gamma). \label{index comp}
\end{equation}
\begin{equation}\label{index fini d} |\MasSymp(M_1,\gamma)-\MasSymp(M,\gamma)|\leq 2n. 
\end{equation}

For $\gamma\in C^1([a,b], \Sp(2n))$ satisfied $\dot{\gamma}(t)=JB(t)\gamma(t)$. We say $\gamma$ is a positive path if $B(t)>0$ for $t\in[a,b]$. In this case, we have 
\begin{equation} \MasSymp(\omega M,\gamma)=\sum_{\xi\in[a,b)}\dim\ker(\gamma(\xi)-\omega M). \label{posi index}
\end{equation}

\begin{rem}
		
		In \cite{LZ00b}, the authors generalized the Maslov index to complex symplectic space.
		Let $(\cdot,\cdot)$ be the standard inner product of $\C^{2n}$.
		Define  $\omega(x,y)=(Jx,y)$ as the symplectic form on $\C^{2n}$.
		Then the Lagrangian Grassmannian can also be defined.
		Then the Maslov index can also be defined for a pair of continuous paths in complex Lagrangian Grassmannian and each property also holds for complex Maslov index.
		
		They also define the  complex symplectic group as $	\operatorname{Sp}(2 n, \mathbf{C})=\left\{M \in \operatorname{GL}(2 n, \mathbf{C}) \mid M^{*} J M=J\right\}$.
		Then  for real symplectic matrix $M$ and $\omega\in \U$, $\omega M$ is a complex symplectic matrix.
		
		So $\MasSymp(\omega M,\gamma)=\mu(\Gr(\omega M),\Gr(\gamma))$ is well defined and theorem \ref{thm:comparison} is also proved for complex Maslov index in  \cite{ZWZ18}.
\end{rem}

\section*{Acknowledgement}
 The first author sincerely thanks Yingfei Yi for the suggestion of using index theory  to study  the quasi-periodic orbits.

\vspace{1cm}
	\noindent
	\textsc{Prof. Xijun Hu}\\
	School of Mathematics\\
	Shandong University\\
	Jinan, Shandong, 250100 \\
	The People's Republic of China \\
	China\\
	E-mail: \email{xjhu@sdu.edu.cn}

\vspace{1cm}
\noindent
\textsc{Prof. Li Wu}\\
School of Mathematics\\
Shandong University\\
Jinan, Shandong, 250100\\
The People's Republic of China \\
China\\
E-mail:\email{vvvli@sdu.edu.cn}


\begin{thebibliography}{99}

\bibitem[Arn67]{Arn67}
{\sc Arnol'd, V. I.}
\newblock On a characteristic  class entering into conditions of
quantization.
\newblock (Russian) Funkcional. Anal. i Priložen. 1 (1967), 1--14.


\bibitem[CLM94]{CLM94}
{\sc Cappell, Sylvain E.; Lee, Ronnie; Miller, Edward Y.}
\newblock On the Maslov index.
\newblock Comm. Pure Appl. Math. 47 (1994), no. 2, 121--186.



\bibitem[Eke90]{Eke90}
{\sc Ekeland, Ivar.}
\newblock Convexity methods in Hamiltonian mechanics.
\newblock Springer Berlin, 1990.


\bibitem[HP17a]{HP17a}
{\sc Hu, Xijun; Portaluri, Alessandro.}
\newblock Index theory for heteroclinic orbits of Hamiltonian systems
\newblock  Calc. Var. Partial Differential Equations 56 (2017), no. 6, Art. 167.


\bibitem[HS09]{HS09}
{\sc Hu, Xijun; Sun, Shanzhong.}
\newblock Index and stability of symmetric periodic
orbits in Hamiltonian systems with application to figure-eight orbit.
\newblock Comm. Math. Phys. 290 (2009), no. 2, 737--777.


\bibitem[LT15]{LT15}
{\sc Liu, Chungen; Tang, Shanshan.}
\newblock  Maslov $(P,\omega)$-index theory for symplectic paths.
\newblock  Adv. Nonlinear Stud. 15 (2015), no. 4, 963-990.

\bibitem[Lon02]{Lon02}
{\sc  Long, Yiming.}
\newblock Index theory for symplectic paths with applications.
\newblock Progress in Mathematics, 207. Birkh\"auser Verlag, Basel, 2002.


\bibitem[LZ00a]{LZ00a}
{\sc  Long, Yiming; Zhu, Chaofeng.}
\newblock Maslov-type index theory for symplectic paths and spectral flow I.
\newblock Chinese Ann. Math. Ser. B 20 (1999), no. 4, 413--424.



\bibitem[LZ00b]{LZ00b}
{\sc Long, Yiming; Zhu, Chaofeng.}
\newblock Maslov-type index theory for symplectic paths
and spectral flow II.
\newblock  Chinese Ann. Math. Ser. B 21 (2000), no. 1, 89--108.




\bibitem[Hale80]{Hale80} 
{\sc Hale, J. K. }
 \newblock Ordinary differential equations. 
\newblock R. E. Krieger Pub. Co. 1980.

\bibitem[JM82]{JM82} 
{\sc Johnson, R.; Moser, J. }
\newblock The rotation number for almost periodic potentials. 
\newblock Communications in Mathematical Physics, 84(3), 403–438(1982).


\bibitem[Palmer88]{Palmer88}
{\sc Palmer, K. J. }
\newblock  Exponential Dichotomies and Fredholm Operators. 
\newblock Proceedings of the American Mathematical Society, 104(1), 149–156(1988).
	
\bibitem[RS93]{RS93}
{\sc Robbin, Joel; Salamon, Dietmar.}
\newblock The Maslov index for paths.
\newblock Topology 32 (1993), no. 4, 827--844.

	
\bibitem[Walter82]{Walter82}
 {\sc Peter Walters.}
 \newblock An Introduction to Ergodic Theory. Springer-Verlag New York. 1982
 

\bibitem[ZWZ18]{ZWZ18}
{\sc Zhou, Y.; Wu, L.; Zhu, C.}
\newblock Hörmander index in finite-dimensional case.
\newblock Frontiers of Mathematics in China, 13(2018),  725-761.






\end{thebibliography}
\end{document}